\newtheorem{theorem}{Theorem}[section]
\newtheorem{lemma}[theorem]{Lemma}
\theoremstyle{definition}
\newtheorem{example}[theorem]{Example}
\theoremstyle{remark}
\newcommand{\R}{\mathbb{R}}
\newcommand{\XX}{\mathcal{X}}
\newcommand{\E}{\mathbb{E}}
\newcommand{\Var}{\operatorname{Var}}
\newcommand*\diff{\mathop{}\!\mathrm{d}}
\newcommand{\1}{\mathds{1}}
\DeclareMathOperator{\eff}{eff}
\newcommand{\X}{\boldsymbol{X}}
\newcommand{\B}{\boldsymbol{\beta}}
\newcommand{\f}{{\bf f}}
\newcommand{\M}{{\bf M}}
\renewcommand{\E}{\operatorname{E}}
\newcommand{\Prob}{\operatorname{P}}
\newcommand{\trace}{\operatorname{trace}}
\newtheorem{corollary}[theorem]{Corollary}
\begin{document}
	
	\title[Optimal Subsampling Design for Polynomial Regression]{Optimal Subsampling Design for Polynomial Regression in one Covariate}
	
	
	\author{Torsten Reuter}
	\thanks{Corresponding author: Torsten Reuter.}
	\address{Otto von Guericke University Magdeburg. Universitätsplatz 2, 39106 Magdeburg,
		Germany}
	\curraddr{}
	\email{torsten.reuter@ovgu.de}
	
	\author{Rainer Schwabe}
	\address{Otto von Guericke University Magdeburg. Universitätsplatz 2, 39106 Magdeburg,
		Germany}
	\curraddr{}
	\email{rainer.schwabe@ovgu.de}
	
	\subjclass[2020]{Primary: 62K05. Secondary: 62R07}
	\keywords{Subdata, $ D $-optimality, Massive Data, Polynomial Regression.}
	\date{}
	

	\begin{abstract}
	Improvements in technology lead to increasing availability of large data sets which makes the need for data reduction and informative subsamples ever more important. 
	In this paper we construct $ D $-optimal subsampling designs for polynomial regression in one covariate for invariant distributions of the covariate. 
	We study quadratic regression more closely for specific distributions. 
	In particular we make statements on the shape of the resulting optimal subsampling designs and the effect of the subsample size on the design. 
	To illustrate the advantage of the optimal subsampling designs we examine the efficiency of uniform random subsampling. 
	\end{abstract}
	
	\maketitle
	
\section{Introduction}
\label{sec:introduction}

Data Reduction is a major challenge as technological advances have led to a massive increase in data collection to a point where traditional statistical methods fail or computing power can not keep up. In this case we speak of big data. 
We typically differentiate between the case where the number of  is much larger than the number of observations and the case where the massive amount of observations is the problem. 
The first case is well studied, most notably by \cite{tibshirani1996regression} introducing LASSO, which utilizes $ \ell_{1} $ penalization to find sparse parameter vectors, thus fusing subset selection and ridge regression. 
The second case, often referred to as massive data, can be tackled in two ways. 
Firstly in a probabilistic fashion, creating random subsamples in a non-uniform manner. 
Prominent studies include \cite{drineas2006sampling}, \cite{mahoney2011random} and \cite{ma2014statistical}. 
They present subsampling methods for linear regression models called algorithmic leveraging that sample according to probabilities based on the normalized statistical leverage scores of the covariate matrix. 
More recently \cite{derezinski2018reverse} study volume sampling, where subdata is chosen proportional to the squared volume of the parallelepiped spanned by its observations. 
Conversely to these probabilistic methods one can select subdata by applying deterministic rules. 
\cite{shi2021model} present such a method, that maximizes the minimal distance between two observations in the subdata.
\cite{wang2021orthogonal} propose orthogonal subsampling inspired by orthogonal arrays. 
Most prominently, \cite{wang2019information} introduce the information-based optimal subdata selection (IBOSS) to tackle big data linear regression in a deterministic fashion based on $ D $-optimality.  

In this paper we study $ D $-optimal subsampling designs for polynomial regression in one covariate, where the goal is to select a percentage $ \alpha $ of the full data that maximizes the determinant of the information matrix. 
For the conventional study of approximate designs in this setting we refer to \cite{gaffke1996approximate}. \cite{heiligers1992invariant} consider specifically cubic regression on a ball. 
We consider $ D $-optimal designs with measure $ \alpha $ that are bounded from above by the distribution of the known covariate. 
Such directly bounded designs were first studied by \cite{wynn1977optimum} and \cite{fedorov1989optimal}.
\cite{pronzato2004minimax} considers this setting using a form of the subsampling design standardized to one and bounded by $\alpha^{-1}$ times the distribution of the covariates. 
More recently, \cite{pronzato2021sequential} studies the same in the context of sequential subsampling. For the characterization of the optimal subsampling designs we make use of an equivalence theorem by \cite{sahm2001note}. 
This equivalence theorem enables us to construct such subsampling designs for various settings of the distributional assumptions on the covariate. 
Here we will only look at distributions of the covariate that are invariant to a sign change, i.e. symmetric about the vertical axis. 
We discuss the shape of $ D $-optimal subsampling subsampling designs for polynomial regression of degree $ q $ first. 
We then study quadratic regression under several distributional assumptions more closely, after showing two examples for simple linear regression. 
In particular we take a look at the percentage of mass of the optimal subsampling design on the outer intervals compared to the inner one, which changes drastically given the distribution of the covariate, particularly for heavy-tailed distributions. 
In addition we examine the efficiency of uniform random subsampling to illustrate the advantage of the optimal subsampling designs. 
All numerical results are obtained by the Newton method 
implemented in the \textbf{\textsf{R}} package \textit{nleqslv} by \cite{nleqslv}.
All relevant \textbf{\textsf{R}} scripts are available on a GitHub repository
\url{https://github.com/TorstenReuter/polynomial_regression_in_one_covariate}.

The rest of this paper is organized as follows. 
In Section~\ref{sec:model} 
we specify the polynomial model. 
In Section~\ref{sec:design} we
introduce the concept of continuous subsampling designs
and give characterizations for optimization.
In Sections~\ref{sec:linear} and \ref{sec:quadratic} 
we present optimal subsampling designs 
in the case of linear and quadratic regression, respectively,
for various classes of distributions of the covariate.
Section~\ref{sec:efficiency} contains some efficiency considerations
showing the strength of improvement of
the performance of the optimal subsampling design 
compared to random subsampling.
The paper concludes with a discussion  
in Section~\ref{sec:discussion}.
Proofs are deferred to an Appendix.

\section{Model Specification}
\label{sec:model}

We consider the situation of pairs $(x_i,y_i)$ of data, 
where $y_i$ is the value of the response variable $Y_i$
and $x_i$ is the value of a single covariate $X_i$ for unit $i = 1,\dots,n$,
for very large numbers of units $ n $.
We assume that the dependence of the response on the covariate
is given by a polynomial regression model
\begin{equation*}
	Y_{i} =\beta_0 + \beta_1 X_{i} + \beta_2 X_{i}^2 + \dots + \beta_q X_{i}^q + \varepsilon_{i} 
\end{equation*}
with independent, homoscedastic random errors $\varepsilon_i$ having zero mean 
($\E(\varepsilon_i)=0$, $\Var(\varepsilon_i)=\sigma_\varepsilon^2>0$).
The largest exponent $q \geq 1$ denotes the degree of the polynomial regression,
and $p=q+1$ is the number of regression parameters 
$\beta_0, \dots,\beta_q$ to be estimated,
where, for each $k= 1,\dots,q$, 
the parameter $\beta_k$ is the coefficient 
for the $k$th monomial $x^k$, and  $\beta_0$ denotes the intercept.
For example, for $q=1$, we have ordinary linear regression,
$Y_{i} =\beta_0 + \beta_1 X_{i} + \varepsilon_{i}$, 
with $p=2$ parameters $\beta_0$ (intercept) and $\beta_1$ (slope)
and, for $q=2$, we have quadratic regression,
$Y_{i} =\beta_0 + \beta_1 X_{i} + \beta_2 X_{i}^2 + \varepsilon_{i}$,
with $p=3$ and an additional curvature parameter $\beta_2$.
Further, we assume that the units of the covariate $X_i$ are identically distributed
and that all $X_i$ and random errors $\varepsilon_{i^\prime}$
are independent.

For notational convenience, we write the polynomial regression 
as a general linear model
\begin{equation*}
	Y_{i} = \f(X_{i})^{\top}\B + \varepsilon_{i} \, , 
\end{equation*}
where $ \f(x) = (1, x, \dots, x^{q})^{\top}$ is the $p$-dimensional vector
of regression functions and $ \B = (\beta_0, \beta_1,\dots,\beta_q)^\top$
is the $p$-dimensional vector of regression parameters.

\section{Subsampling Design}
\label{sec:design}

We are faced with the problem that the responses $ Y_{i} $ 
are expensive or difficult to observe
while the values $x_i$ of all units $X_i$ of the covariate are available.
To overcome this problem, we consider the situation that 
the responses $ Y_{i} $ will be observed only for a certain 
percentage $ \alpha $ of the units ($0<\alpha<1$) 
and that these units will be selected on the basis of the knowledge 
of the values $ x_{i} $ of the covariate for all units. 
As an alternative motivation, we can consider a situation 
where all pairs $ (x_{i}, y_{i}) $ are available but parameter estimation 
is computationally feasible only on a percentage $ \alpha $ of the data. 
In either case we want to find the subsample of pairs $ (x_{i}, y_{i}) $ 
that yields the most precise estimation of the parameter vector $ \B $. 

To obtain analytical results, the covariate $ X_{i} $ 
is supposed to have a continuous distribution with density $ f_{X}(x) $,
and we assume that the distribution of the covariate is known. 
The aim is to find a subsample of this distribution that covers a percentage
$\alpha$ of the distribution and that contains the most information. 
For this, we will consider continuous designs $ \xi $ 
as measures of mass $\alpha$
on $\R$ with density $f_{\xi}(x)$ bounded by the density $f_X(x)$ 
of the covariate $X_i$ such that $ \int f_{\xi}(x)\diff x = \alpha $ and
 $  f_{\xi}(x) \leq f_X(x)$ for all $x \in \R$. 
A subsample can then be generated according to such a continuous design
by accepting units $i$ with probability $f_\xi(x_i)/f_X(x_i)$.

For a continuous design $\xi$, the information matrix $ \M(\xi)$ is defined as  
$ \M(\xi) = \int \f(x)\f(x)^\top  f_\xi(x) \diff x$.
In the present polynomial setup, 
$ \M(\xi) = \left(m_{j + j^\prime}(\xi)\right)_{j = 0, \dots, q}^{j^\prime = 0, \dots, q}$,
where $m_k = \int x^k  f_\xi(x) \diff x$ is the $k$th moment
associated with the design $\xi$.
Thus, it has to be required 
that the distribution of $X_i$ has a finite moment $\E(X_i^{2q})$ of order $2q$
in order to guarantee that all entries in the information matrix $\M(\xi)$ exist
for all continuous designs $\xi$ 
for which the density $f_\xi(x)$ is bounded by $f_X(x)$.

The information matrix $\M(\xi)$ measures the performance of the design $\xi$
in the sense that the covariance matrix of the least squares estimator
$\hat\B$ based on a subsample according to the design $\xi$ is proportional 
to the inverse $\M(\xi)^{-1}$ of the information matrix $\M(\xi)$
or, more precisely,  $\sqrt{\alpha n}(\hat\B-\B)$ is normally distributed 
with mean zero and covariance matrix $\sigma_\varepsilon^2\M(\xi)^{-1}$,
at least asymptotically.
Note that for continuous designs $\xi$ the information matrix $\M(\xi)$
is always of full rank and, hence, the inverse $\M(\xi)^{-1}$ exists.
Based on the relation to the covariance matrix,
it is desirable to maximize the information matrix $\M(\xi)$. 
However, as well-known in design optimization, 
maximization of the information matrix 
cannot be achieved uniformly with respect to the Loewner ordering
of positive-definiteness. Thus, commonly, a design criterion
which is a real valued functional of
the information matrix $\M(\xi)$ will be maximized, instead.
We will focus here on the most popular design criterion in applications,
the $D$-criterion, in its common form $\log(\det(\M(\xi)))$ to be maximized.
Maximization of the $D$-criterion can be interpreted in terms of the
covariance matrix to be the same as minimizing the volume
of the confidence ellipsoid for the whole parameter vector $\B$
based on the least squares estimator 
or, equivalently, minimizing the volume of the 
acceptance region for a Wald test on the whole model.
The subsampling design $\xi^{*}$ that maximizes the $D$-criterion 
$\log(\det(\M(\xi)))$ will be called $D$-optimal, and its density is denoted 
by  $ f_{\xi^{*}}(x) $. 

To obtain $D$-optimal subsampling designs, we will make use of standard techniques
coming from constrained convex optimization and symmetrization.
For convex optimization we employ the directional derivative
\begin{equation*}
	F_{D}(\xi, \eta) = \lim_{\epsilon \to 0^+} \frac{1}{\epsilon}
	\left(\log(\det(\M((1 - \epsilon) \xi + \epsilon \eta))) - \log(\det(\M(\xi)))\right)
\end{equation*}
of the $D$-criterion at a design $\xi$ 
with non-singular information matrix $\M(\xi)$ 
in the direction of a design $\eta$,
where we allow here $\eta$ to be a general design of mass $\alpha$
that has not necessarily a density bounded by $f_{X}(x)$.
In particular, $\eta = \xi_x$ may be a one-point design which assigns all mass $\alpha$ 
to a single setting $x$ in $\R$.
Evaluating of the directional derivative yields
$F_{D}(\xi, \eta) = p - \trace(\M(\xi)^{-1}\M(\eta))$
\cite[compare][Example 3.8]{silvey1980optimal}
which reduces to  
$F_{D}(\xi, \xi_x) = p - \alpha \f(x)^\top \M(\xi)^{-1} \f(x)$
for a one-point design $\eta = \xi_x$.
Equivalently, for one-point designs $\eta = \xi_x$, we may consider
the sensitivity function
$\psi(x, \xi) = \alpha \f(x)^\top \M(\xi)^{-1} \f(x)$
which incorporates the essential part of the directional derivative
($\psi(x, \xi) =  p - F_{D}(\xi, \xi_x)$).
For the characterization of the $D$-optimal continuous subsampling design,
the constrained equivalence theorem
under Kuhn-Tucker conditions 
\cite[see][Corollary~1~(c)]{sahm2001note}
can be reformulated in terms of the sensitivity function and applied 
to our case of polynomial regression. 

\begin{theorem}
	\label{th:opt-design-degree-q}
	In polynomial regression of degree $q$ with density $f_X(x)$ of the covariate $X_i$, 
	the subsampling design $\xi^*$ with support $ \XX^{*} $ is $D$-optimal if and only if
	there exist a threshold $s^*$ 
	and settings $a_1 > \dots > a_{2r}$ for some $r$ ($1 \leq r \leq q$) such that
	\begin{itemize}
		\item[(i)]	
			the $D$-optimal subsampling design $\xi^{*}$ is given by
			\begin{equation*}
				\label{eq:opt-dens-degree-q}
				f_{\xi^{*}}(x) = 
				\left\{
					\begin{array}{ll}
						f_{X}(x) & \mbox{if } x \in \XX^* 
						\\
						0 & \mbox{otherwise}
					\end{array}
				\right. \,
			\end{equation*}
		\item[(ii)] $\psi(x, \xi^*) \geq s^*$ for $x \in \XX^*$, and
		\item[(iii)] $\psi(x, \xi^*) < s^*$ for $x \not\in \XX^*$,
	\end{itemize}	
	where $\XX^* = \bigcup_{k=0}^r \mathcal{I}_k$
	and $\mathcal{I}_0 = [a_{1}, \infty)$, 
	$\mathcal{I}_r = (-\infty, a_{2r}]$, and
	$\mathcal{I}_k = [a_{2k + 1}, a_{2k}]$, for $k = 1, \dots, r - 1$,
	are mutually disjoint intervals.
\end{theorem}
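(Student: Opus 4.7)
The plan is to deduce the characterization from two ingredients: the constrained equivalence theorem of Sahm and Schwabe, applied to the bounded-density design problem, and the fact that in polynomial regression the sensitivity function is itself a polynomial of known degree. The proof is essentially a specialization of a general KKT-type result, followed by a purely algebraic argument about level sets of a polynomial.

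First I would set up the optimization problem. The $D$-criterion $\log\det(\M(\xi))$ is concave in $\xi$ on the convex set of measures of mass $\alpha$ with density bounded by $f_X$, and its Gateaux derivative at $\xi$ in direction $\eta$ is $F_D(\xi,\eta) = p - \trace(\M(\xi)^{-1}\M(\eta))$. Applying Corollary~1(c) of \cite{sahm2001note} to this constrained problem yields the following threshold characterization: $\xi^*$ is $D$-optimal if and only if there is a constant $s^*$ (the Lagrange multiplier for the total mass constraint, scaled by $\alpha$) such that, pointwise, $f_{\xi^*}(x) = f_X(x)$ wherever $\psi(x,\xi^*) > s^*$ and $f_{\xi^*}(x) = 0$ wherever $\psi(x,\xi^*) < s^*$, where $\psi(x,\xi^*) = \alpha\,\f(x)^{\top}\M(\xi^*)^{-1}\f(x)$ is the sensitivity function. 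This already gives assertion (i) with $\XX^* = \{x : \psi(x,\xi^*) \geq s^*\}$, together with (ii) and (iii); the ambiguity on $\{\psi = s^*\}$ is immaterial because this set has Lebesgue measure zero (as the next step shows).

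Next I would exploit the polynomial structure. For $\f(x) = (1,x,\dots,x^q)^{\top}$, the sensitivity function $\psi(\cdot,\xi^*)$ is a polynomial in $x$ of degree $2q$, whose leading coefficient is $\alpha$ times the $(q,q)$-entry of $\M(\xi^*)^{-1}$, which is strictly positive because $\M(\xi^*)^{-1}$ is positive definite on continuous designs. Hence $\psi(x,\xi^*) \to +\infty$ as $|x|\to\infty$, and the polynomial $\psi(\cdot,\xi^*) - s^*$ has at most $2q$ real roots. The superlevel set $\XX^*$ therefore has a finite, even number $2r$ of finite boundary points $a_1 > \dots > a_{2r}$, the two outermost intervals are unbounded, and between them the set alternates in the pattern $\XX^* = (-\infty,a_{2r}] \cup [a_{2r-1},a_{2r-2}] \cup \dots \cup [a_3,a_2] \cup [a_1,\infty)$. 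The bound $r\leq q$ is forced by the polynomial degree, while $r \geq 1$ follows because $\alpha < 1$ rules out $\XX^* = \R$. This is precisely the structure claimed in the theorem.

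The only real obstacle is reconciling the Sahm--Schwabe statement with the present formulation: one must verify that the Lagrangian associated with the density bound $f_\xi \leq f_X$ and the mass constraint $\int f_\xi = \alpha$ produces the constant $s^*$ appearing in the sensitivity condition (rather than a function of $x$), and that the conditions stated are both necessary and sufficient. Once this is in place, assertions (i)--(iii) are a direct translation, and the geometric description of $\XX^*$ as a union of at most $q+1$ intervals is a routine consequence of the degree bound on $\psi(\cdot,\xi^*)$ together with its behavior at infinity.
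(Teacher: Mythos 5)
Your proposal is correct and follows essentially the same route as the paper: it invokes Corollary~1(c) of Sahm and Schwabe to obtain the threshold characterization (the paper's Equivalence Theorem, supported by a lemma that $\psi(X_i,\xi)$ has a continuous distribution so the boundary set $\{\psi = s^*\}$ is negligible), and then uses the fact that $\psi(\cdot,\xi^*)$ is a degree-$2q$ polynomial with positive leading coefficient to deduce the alternating structure of at most $q+1$ intervals with unbounded outermost pieces and $1 \leq r \leq q$.
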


The density $f_{\xi^*}(x) = f_X(x) \1_{\XX^*}(x)
 = \sum_{k=0}^r f_X(x) \1_{\mathcal{I}_k}(x)$
of the $D$-optimal subsampling design $\xi^*$ is concentrated on, at most,
$q + 1$ intervals $\mathcal{I}_k$, 
where $\1_{A}(x)$ denotes the indicator function on the set $A$,
i.\,e.\ $\1_A(x) = 1$ for $x \in A$, and $\1_A(x) = 0$ otherwise.
The density $f_{\xi^*}(x)$ has a $0$-$1$-property such that it 
is either equal to the density $f_X(x)$ of the covariate (on $\XX^*$)
or equal to $0$ (on the complement of $\XX^*$).
Thus, the generation of a subsample according to the optimal 
continuous subsampling design $\xi^*$ can be implemented easily
by accepting all units $i$ for which the value $x_i$ of the covariate 
is in $\XX^*$ and rejecting all other units with $x_i \not\in \XX^*$.
The threshold $s^*$ can be interpreted as the $(1- \alpha)$-quantile 
of the distribution of the sensitivity function $\psi(X_i,\xi^*)$
as a function of the random variable $X_i$
\cite[see][]{pronzato2021sequential}.

A further general concept to be used is equivariance.
This can be employed to transform the $D$-optimal subsampling design simultaneously 
with a transformation of the distribution of the covariate.
More precisely, the location-scale transformation $Z_i = \sigma X_i + \mu$
of the covariate and its distribution is conformable with
the regression function $\f(x)$ in polynomial regression, 
and the $D$-criterion is equivariant with respect to such transformations.

\begin{theorem}
	\label{th:equivariant}
	Let $f_{\xi^*}(x)$ be the density for a $D$-optimal subsampling design $\xi^*$ 
	for covariate $X_i$ with density $f_X(x)$.
	Then $f_{\zeta^*}(z) = \frac{1}{\sigma} f_{\xi^*}(\frac{z - \mu}{\sigma})$ 
	is the density for a $D$-optimal subsampling design $\zeta^*$ 
	for covariate $Z_i = \sigma X_i + \mu$ with density 
	$f_Z(z) = \frac{1}{\sigma} f_{X}(\frac{z - \mu}{\sigma})$.
\end{theorem}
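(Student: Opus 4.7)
The plan is to exploit the fact that the monomial regression vector $\f(x)=(1,x,\dots,x^q)^\top$ transforms linearly under an affine change of variable, and that the $D$-criterion is multiplicatively invariant under such linear transformations of the information matrix.

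First I would verify that $\zeta^*$ is indeed an admissible subsampling design for $Z_i$, i.e.\ that $\int f_{\zeta^*}(z)\diff z=\alpha$ and $f_{\zeta^*}(z)\leq f_Z(z)$ for all $z\in\R$. Both follow directly from the substitution $u=(z-\mu)/\sigma$ together with the corresponding properties of $\xi^*$.

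The decisive observation is that, since each monomial $z^k=(\sigma x+\mu)^k$ expands via the binomial theorem into a polynomial in $x$ of degree at most $k$, there is a lower triangular $p\times p$ matrix $L=L(\mu,\sigma)$ with diagonal entries $1,\sigma,\sigma^2,\dots,\sigma^q$ such that $\f(\sigma x+\mu)=L\f(x)$. In particular $L$ is non-singular with $\det(L)=\sigma^{q(q+1)/2}\neq 0$. Performing the change of variables $z=\sigma x+\mu$ in the integral defining $\M(\zeta^*)$ then gives
\begin{equation*}
\M(\zeta^*)=\int \f(z)\f(z)^\top f_{\zeta^*}(z)\diff z
=\int L\f(x)\f(x)^\top L^\top f_{\xi^*}(x)\diff x
=L\,\M(\xi^*)\,L^\top,
\end{equation*}
so $\det(\M(\zeta^*))=\det(L)^2\det(\M(\xi^*))$.

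Finally, for any competing admissible subsampling design $\zeta$ for $Z_i$, I would define $f_\xi(x)=\sigma f_\zeta(\sigma x+\mu)$ and check that this yields an admissible subsampling design $\xi$ for $X_i$ of mass $\alpha$ bounded by $f_X$. The same change of variables shows $\M(\zeta)=L\,\M(\xi)\,L^\top$, hence $\det(\M(\zeta))=\det(L)^2\det(\M(\xi))\leq \det(L)^2\det(\M(\xi^*))=\det(\M(\zeta^*))$ by $D$-optimality of $\xi^*$. This proves that $\zeta^*$ is $D$-optimal. There is no real obstacle here; the only step that requires care is ensuring that the correspondence $\zeta\leftrightarrow\xi$ preserves admissibility in both directions, which is where the Jacobian factor $\sigma$ in the definition of $f_{\zeta^*}$ is essential.
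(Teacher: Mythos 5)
Your proposal is correct and follows essentially the same route as the paper's own proof: both rest on the conformability identity $\f(\sigma x+\mu)=\mathbf{Q}\f(x)$ for a non-singular matrix $\mathbf{Q}$ (your lower triangular $L$ with $\det(L)=\sigma^{q(q+1)/2}$ makes this explicit), the resulting relation $\M(\zeta)=\mathbf{Q}\M(\xi)\mathbf{Q}^\top$ via change of variables, and the observation that the density correspondence $f_\zeta(z)=\frac{1}{\sigma}f_\xi(\frac{z-\mu}{\sigma})$ is a mass- and boundedness-preserving bijection between the two classes of admissible designs. Your write-up is if anything slightly more detailed on the admissibility bijection and the determinant of the transformation matrix.
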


As a consequence, also the optimal subsampling design $\zeta^*$ is concentrated on, 
at most, $p = q + 1$ intervals, and its density $f_{\zeta^*}(z)$  
is either equal to the density $f_Z(z)$ of the covariate $Z_i$
(on $\mathcal{Z}^* = \sigma \XX^* + \mu$) or it is equal to $0$ (elsewhere)
such that, also here, the optimal subsampling can be implemented quite easily.

A further reduction of the optimization problem can be achieved
by utilizing symmetry properties.
Therefore, we consider the transformation of sign change, $g(x) = -x$,
and assume that the distribution of the covariate is symmetric,
$f_X(-x) = f_X(x)$ for all $x$.
For a continuous design $\xi$, the design $\xi^g$ transformed by
sign change has density $f_{\xi^g}(x) = f_{\xi}(-x)$ and, 
thus, satisfies the boundedness condition $f_{\xi^g}(x) \leq f_X(x)$,
when the distribution of $X_i$ is symmetric,
and has the same value for the $D$-criterion as $\xi$,
$\log(\det(\M(\xi^g))) = \log(\det(\M(\xi)))$.
By the concavity of the $D$-criterion,
standard invariance arguments can be used 
as in Pukelsheim (\citeyear[Chapter~13]{pukelsheim1993optimal}) 
and \cite{heiligers1992invariant}.
In particular, any continuous design $\xi$ is dominated
by its symmetrization $\bar\xi = (\xi + \xi^g)/2$ 
with density $f_{\bar\xi}(x) = (f_\xi(x) + f_\xi(-x))/2 \leq f_X(x)$
such that $\log(\det(\M(\bar\xi))) \geq \log(\det(\M(\xi)))$
\cite[Chapter 13.4]{pukelsheim1993optimal}.
Hence, we can restrict the search for a $D$-optimal subsampling design
to symmetric designs $\bar\xi$ with density 
$f_{\bar\xi}(-x) = f_{\bar\xi}(x)$ which are invariant with respect to
sign change (${\bar\xi}^g = \bar\xi$).
For these symmetric subsampling designs $\bar\xi$,
the moments $m_{k}(\bar\xi)$ are zero for odd $k$
and positive when $k$ is even.
Hence, the information matrix $\M(\bar\xi)$ is an even checkerboard matrix 
\cite[see][]{Jones_2018} with positive entries $m_{j + j^\prime}(\bar\xi)$
for even index sums and entries equal to zero when the index sum is odd.
The inverse $\M(\bar\xi)^{-1}$ of the information matrix $\M(\bar\xi)$ 
shares the structure of an even checkerboard matrix.
Thus, the sensitivity function $\psi(x,\bar\xi)$ is a polynomial 
with only terms of even order and is, hence, a symmetric function
of $x$. This leads to a simplification of the representation of the 
optimal subsampling design in Theorem~\ref*{th:opt-design-degree-q}
because the support $\XX^*$ of the optimal subsampling design $\xi^*$ 
will be symmetric, too.

\begin{corollary}
	\label{cor:opt-design-degree-q-symm}
	In polynomial regression of degree $ q $ with a
	symmetrically distributed covariate $ X_{i} $ with density $f_X(x)$, 
	the $D$-optimal subsampling design $\xi^*$ 
	with density 
	$f_{\xi^*}(x) = \sum_{k=0}^r f_X(x) \1_{\mathcal{I}_k}(x) $
	has symmetric boundaries $a_1, \dots, a_{2r}$
	of the intervals $\mathcal{I}_0 = [a_{1}, \infty)]$, 
	$\mathcal{I}_k = [a_{2k + 1}, a_{2k}]$, and $\mathcal{I}_r = (-\infty, a_{2r}]$, 
	i.\,e.\ $a_{2r + 1 - k} = -a_k$ and,
	accordingly, $\mathcal{I}_{r  - k} = -\mathcal{I}_k$.
\end{corollary}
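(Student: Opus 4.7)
The plan is to combine the symmetrization remark already given in the paragraph preceding the statement with the structural characterization of Theorem~\ref{th:opt-design-degree-q}. First I would start from any $D$-optimal design $\xi^*$ and its sign-change image $\xi^{*g}$, which is admissible because $f_X$ is symmetric and which satisfies $\det(\M(\xi^{*g})) = \det(\M(\xi^*))$ since in polynomial regression $\f(-x) = \diag(1,-1,\dots,(-1)^q)\f(x)$. Thus $\xi^{*g}$ is $D$-optimal as well, and applying strict concavity of $\log\det$ on positive definite matrices to the symmetrization $\bar\xi^* = (\xi^* + \xi^{*g})/2$ forces $\M(\xi^*) = \M(\xi^{*g})$, so that $\bar\xi^*$ is itself $D$-optimal, satisfies $f_{\bar\xi^*} \leq f_X$, and is invariant under sign change.

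Next I would apply Theorem~\ref{th:opt-design-degree-q} to this symmetric $D$-optimal design $\bar\xi^*$: its support $\XX^*$ is, up to a null set, the superlevel set $\{x : \psi(x,\bar\xi^*) \geq s^*\}$ for some threshold $s^*$, and this set decomposes into at most $q+1$ intervals with endpoints $a_1 > \dots > a_{2r}$ as described. The crucial observation, already made in the paragraph preceding the corollary, is that for a symmetric design the information matrix $\M(\bar\xi^*)$ is an even checkerboard matrix (odd moments vanish), its inverse shares that pattern, and consequently $\psi(x,\bar\xi^*) = \alpha \f(x)^\top \M(\bar\xi^*)^{-1} \f(x)$ is a polynomial in $x$ containing only even powers. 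In particular $\psi(\cdot,\bar\xi^*)$ is an even function of $x$.

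Finally, since the superlevel set of an even function is symmetric about the origin, $\XX^* = -\XX^*$. The interval decomposition of Theorem~\ref{th:opt-design-degree-q} is then forced to pair up: the rightmost interval $\mathcal{I}_0 = [a_1,\infty)$ mirrors to the leftmost $\mathcal{I}_r = (-\infty, a_{2r}]$, yielding $a_{2r} = -a_1$, while each inner $\mathcal{I}_k = [a_{2k+1}, a_{2k}]$ mirrors to $\mathcal{I}_{r-k}$, giving $a_{2r+1-k} = -a_k$ for all $k$, which is the stated conclusion. I do not anticipate a serious obstacle: once existence of a symmetric optimal design is secured by the concavity argument, everything reduces to the parity of the sensitivity function and the level-set characterization in Theorem~\ref{th:opt-design-degree-q}. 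The only point requiring mild care is the use of strict concavity of $\log\det$ to guarantee that the symmetrized design is still optimal rather than merely admissible.
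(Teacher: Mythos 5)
Your argument is correct and follows essentially the same route as the paper: the symmetrization-via-concavity step you spell out is exactly the invariance argument the authors place in the discussion immediately preceding the corollary, and the formal proof in the appendix is precisely your remaining step — the checkerboard structure of $\M(\xi^*)^{-1}$ makes $\psi(x,\xi^*)$ an even polynomial, so the roots of $\psi(x,\xi^*)=s^*$ and hence the interval boundaries from Theorem~\ref{th:opt-design-degree-q} pair up symmetrically. Your additional observation that strict concavity forces $\M(\xi^*)=\M(\xi^{*g})$ for \emph{any} optimal design is a nice refinement but not needed beyond what the paper already uses.
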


This characterization of the optimal subsampling design $\xi^*$
will be illustrated in the next two sections 
for ordinary linear regression ($q = 1$)
and for quadratic regression ($q = 2$).

\section{Optimal Subsampling for Linear Regression}
\label{sec:linear}
In the case of ordinary linear regression
$Y_{i} =\beta_0 + \beta_1 X_{i} + \varepsilon_{i}$, 
we have 
\begin{align*}
	\M(\xi) = \begin{pmatrix}
		\alpha     & m_1(\xi)      \\
		m_1(\xi)          & m_2(\xi)     
	\end{pmatrix},
\end{align*}
for the information matrix of any subsampling design $\xi$.
The inverse $\M(\xi)^{-1}$ of the information matrix is given by
\begin{align*}
	\M(\xi)^{-1} = \frac{1}{\alpha m_2(\xi) - m_1(\xi)^2}
	\begin{pmatrix}
		m_2(\xi)     & -m_1(\xi)      \\
		-m_1(\xi)          & \alpha     
	\end{pmatrix},
\end{align*}
and the sensitivity function 
\begin{align}
	\label{eq:sensitivity-linear}
	\psi(x, \xi) = \frac{1}{\alpha m_2(\xi) - m_1(\xi)^2}
	(m_2(\xi) - 2 m_1(\xi) x + \alpha x^2)     
\end{align}
is a polynomial of degree two in $x$.
The $D$-optimal continuous subsampling design $\xi^*$ has density 
$f_\xi(x) = f_X(x)$ for $x \leq a_2$ and for $x \geq a_1$
while $f_\xi(x) = 0$ for $a_2 < x < a_1$.
The corresponding subsampling design then accepts those units $i$
for which $x_i \leq a_2$ or $x_i \geq a_1$,
and rejects all units $i$ for which $a_2 < x_i < a_1$.

To obtain the $D$-optimal continuous subsampling design $\xi^*$
by Theorem~\ref{th:opt-design-degree-q},
the boundary points $a_1$ and $a_2$ have to be determined 
to solve the two non-linear equations
\begin{equation}
	\label{eq:lin-cond1}
	\Prob(X_i \leq a_2) + \Prob(X_i \geq a_1) = \alpha
\end{equation}
and 
\begin{equation*}
	\label{eq:lin-cond2}
	\psi(a_2, \xi^*) = \psi(a_1, \xi^*) \, .
\end{equation*}
By equation~\eqref{eq:sensitivity-linear},
the latter condition can be written as
\begin{equation*}
	\label{eq:lin-cond2a}
	\alpha a_2^2 - 2 m_1(\xi^*) a_2 = \alpha a_1^2 - 2 m_1(\xi^*) a_1 \, ,
\end{equation*}
which can be reformulated as
\begin{equation}
	\label{eq:lin-cond2b}
	\alpha (a_1 + a_2) = 2 m_1(\xi^*) \, .
\end{equation}

When the distribution of $X_i$ is symmetric,
Corollary~\ref{cor:opt-design-degree-q-symm}
provides symmetry $a_2 = -a_1$ of the boundary points.
This is in agreement with condition~\eqref{eq:lin-cond2b}
because $m_1(\xi^*) = 0$ in the case of symmetry.
Further, by the symmetry of the distribution, 
$\Prob(X_i \leq a_2) = \Prob(X_i \geq a_1) = \alpha/2$,
and $a_1$ has to be chosen as the $(1 - \alpha/2)$-quantile
of the distribution of $X_i$ to obtain the $D$-optimal continuous subsampling design.

\begin{example}[normal distribution]
	\label{ex:lin-normal}
	If the covariate $X_i$ comes from a standard normal distribution,
	then the optimal boundaries are the $(\alpha/2)$-
	and the $(1 - \alpha/2)$-quantile $\pm z_{1 - \alpha/2}$,
	and unit $i$ is accepted when $|x_i| \geq z_{1 - \alpha/2}$.
	\par
	For $X_i$ having a general normal distribution 
	with mean $\mu$ and variance $\sigma^2$,
	the optimal boundaries remain to be the $(\alpha/2)$-
	and $(1 - \alpha/2)$-quantile $a_2 = \mu - \sigma z_{1 - \alpha/2}$
	and $a_1 = \mu + \sigma z_{1 - \alpha/2}$, respectively, by 
	Theorem~\ref{th:equivariant}.
\end{example}

This approach applies accordingly to all distributions which are obtained
by a location or scale transformation of a symmetric distribution:
units will be accepted if their values of the covariate
lie in the lower or upper $(\alpha/2)$-tail of the distribution.
This procedure can be interpreted as a theoretical counterpart 
in one dimension of the IBOSS method 
proposed by \cite{wang2019information}.

However, for an asymmetric distribution of the covariate $X_i$, 
the optimal proportions for
sampling from the upper and lower tail may differ.
By condition~\eqref{eq:cond1}, there will be a proportion
$\alpha_1$, $0 \leq \alpha_1 \leq \alpha$,
for the upper tail and $\alpha_2 = \alpha - \alpha_1$
for the lower tail such that $a_1$ is the $(1 - \alpha_1)$-quantile 
and $a_2$ is the $\alpha_2$-quantile of the
distribution of the covariate $X_i$, respectively.
In view of condition~\eqref{eq:lin-cond2b},
neither $\alpha_1$ nor $\alpha_2$ can be zero.
Hence, the optimal subsampling design will have
positive, but not necessarily equal mass at both tails.
This will be illustrated in the next example.

\begin{example}[exponential distribution]
	\label{ex:lin-exponential}
	If the covariate $X_i$ comes from a standard exponential distribution
	with density $ f_{X}(x) = e^{-x},\; x \ge 0 $, we conclude from Theorem~\ref{th:opt-design-degree-q} that 
	$ f_{\xi^{*}}(x) = f_{X}(x) \1_{[0,b]\cup[a,\infty)}(x) $ 
	with $a = a_1$ and $b = a_2$ when $a_2 \geq 0$.
	Otherwise, when $a_2 < 0$, the density $f_X(x)$ of the covariate $X_i$
	vanishes on the left interval $\mathcal{I}_1 = (-\infty, a_2]$
	because the support of the distribution of $X_i$ 
	does not cover the whole range of $\R$.
	In that case, we may formally let $b = 0$.
	Then, we can calculate the entries of $ \M(\xi^{*}) $ 
	as functions of $ a $ and $ b $ as
	\begin{align*}
		m_{1}(\xi^{*}) 
		&= 1 + (a + 1)e^{-a} - (b + 1)e^{-b} \\
		m_{2}(\xi^{*}) 
		&= 2 + (a^2 + 2a + 2)e^{-a} - (b^2 + 2b + 2)e^{-b} \,.
	\end{align*}
	To obtain the optimal solutions for $a$ and $b$ in the case $a_2 \geq 0$, 
	the two non-linear equations \eqref{eq:lin-cond1} and \eqref{eq:lin-cond2b} 
	have to be satisfied which become here  
	$e^{-b} - e^{-a} = 1 - \alpha$
	and $\alpha (a + b) = 2 m_1(\xi^{*})$.
	
	If $a_2 < 0$ would hold, the first condition reveals $a = - \log(\alpha)$ 
	and, hence, $m_1(\xi^{*}) = \alpha (a + 1)$.
	There, similar to the proof of Theorem~\ref{Theo:normal} below,
	the second condition has to be relaxed to $\psi(a,\xi^{*}) \geq \psi(0,\xi^{*})$
	which can be reformulated to 
	$\alpha a \geq 2 m_1(\xi^{*}) = 2\alpha (a + 1)$
	and yields a contradiction.
	Thus, this case can be excluded, 
	and $a_2$ has to be larger than $0$ for all $\alpha$.
	
	For selected values of $ \alpha $, numerical results are presented
	in Table~\ref{Table:Exp}. 
	Additionally to the optimal values for $a$ and $b$, 
	also the proportions 
	$\Prob(X_i \leq b)$
	and
	$\Prob(X_i \geq a)$
	are presented in Table~\ref{Table:Exp}
	together with the percentage of mass
	allocated to the left interval $[0, b]$.
	In Figure~\ref{Figure:Exp}, 
	the density $f_{\xi^{*}}$ of the optimal subsampling design $\xi^*$
	and the corresponding sensitivity function $\psi(x,\xi^*)$ 
	are exhibited for $\alpha = 0.5$ and $\alpha = 0.3$. 
	Vertical lines indicate the positions of the boundary points
	$a$ and $b$, 
	and the dotted horizontal line displays the threshold $s^*$.
	\begin{table}[h]
		\begin{center}
			\caption{Numeric values for the boundary points $a$ and $b$ 
				for selected values of the subsampling proportion $\alpha$ 
				in the case of standard exponential $X_i$}
			\begin{tabular}{l|ccccc} \toprule
				{$\alpha$} & {$b$} & {$\Prob(X_i \leq b)$} & {$a$} & {$\Prob(X_i \geq a)$} 
					& {\% of mass on $[0,b]$} \\ \midrule
				0.5  & 0.39572 & 0.32681 & 1.75335 & 0.17319 & 65.36 \\ 
				0.3  & 0.21398 & 0.19264 & 2.23153 & 0.10736 & 64.21 \\
				0.1  & 0.06343 & 0.06146 & 3.25596 & 0.03854 & 61.46 \\
				0.01 & 0.00579 & 0.00577 & 5.46588 & 0.00423 & 57.71 \\ \bottomrule
			\end{tabular}
			\label{Table:Exp}
		\end{center}
	\end{table}
	\begin{figure}[htb]
		\centering
		\begin{subfigure}[t]{.475\textwidth}
			\centering
			\includegraphics[width=\linewidth]{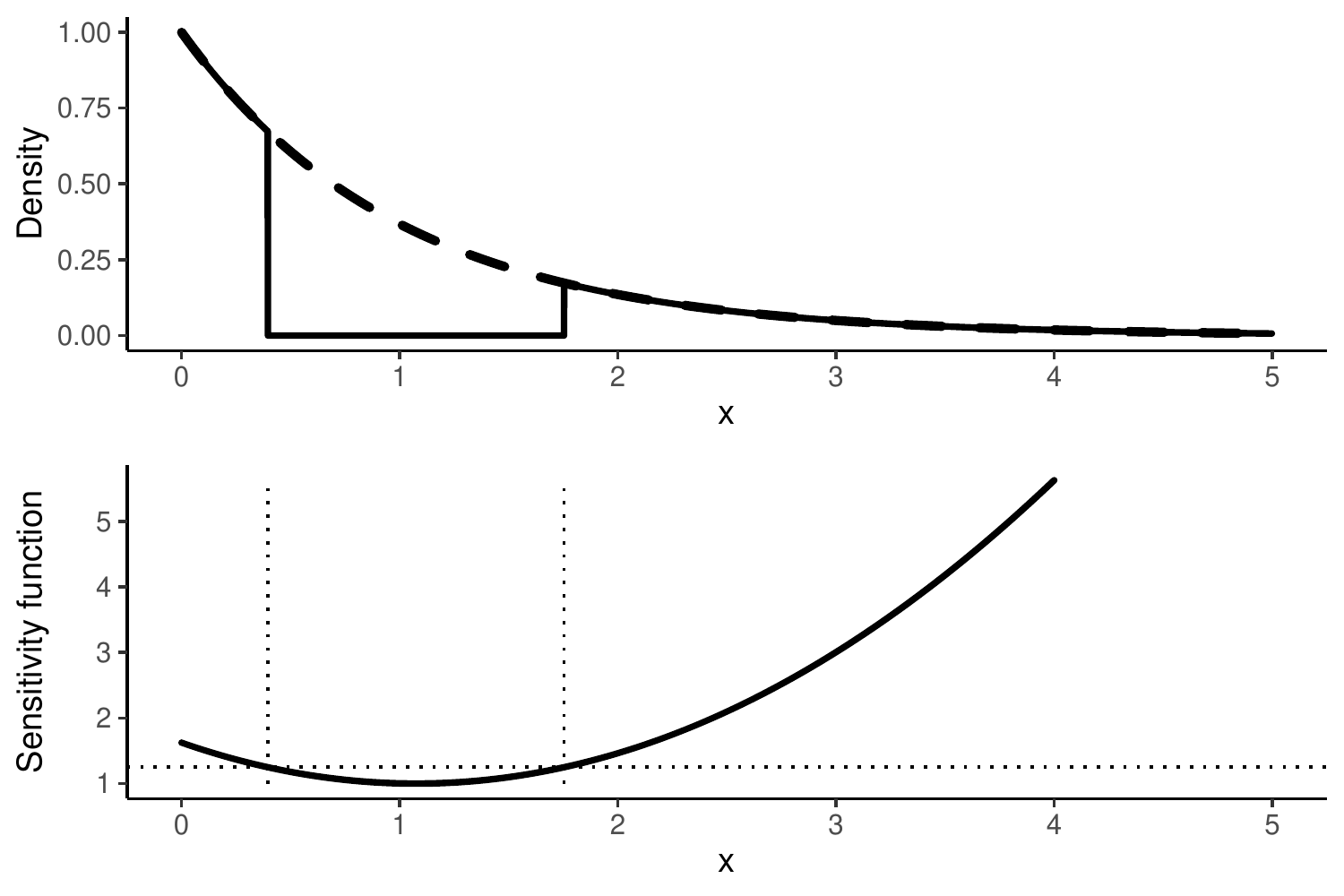}
			\caption{$\alpha = 0.5$}
		\end{subfigure}
		\hfill
		\begin{subfigure}[t]{.475\textwidth}
			\centering
			\includegraphics[width=\linewidth]{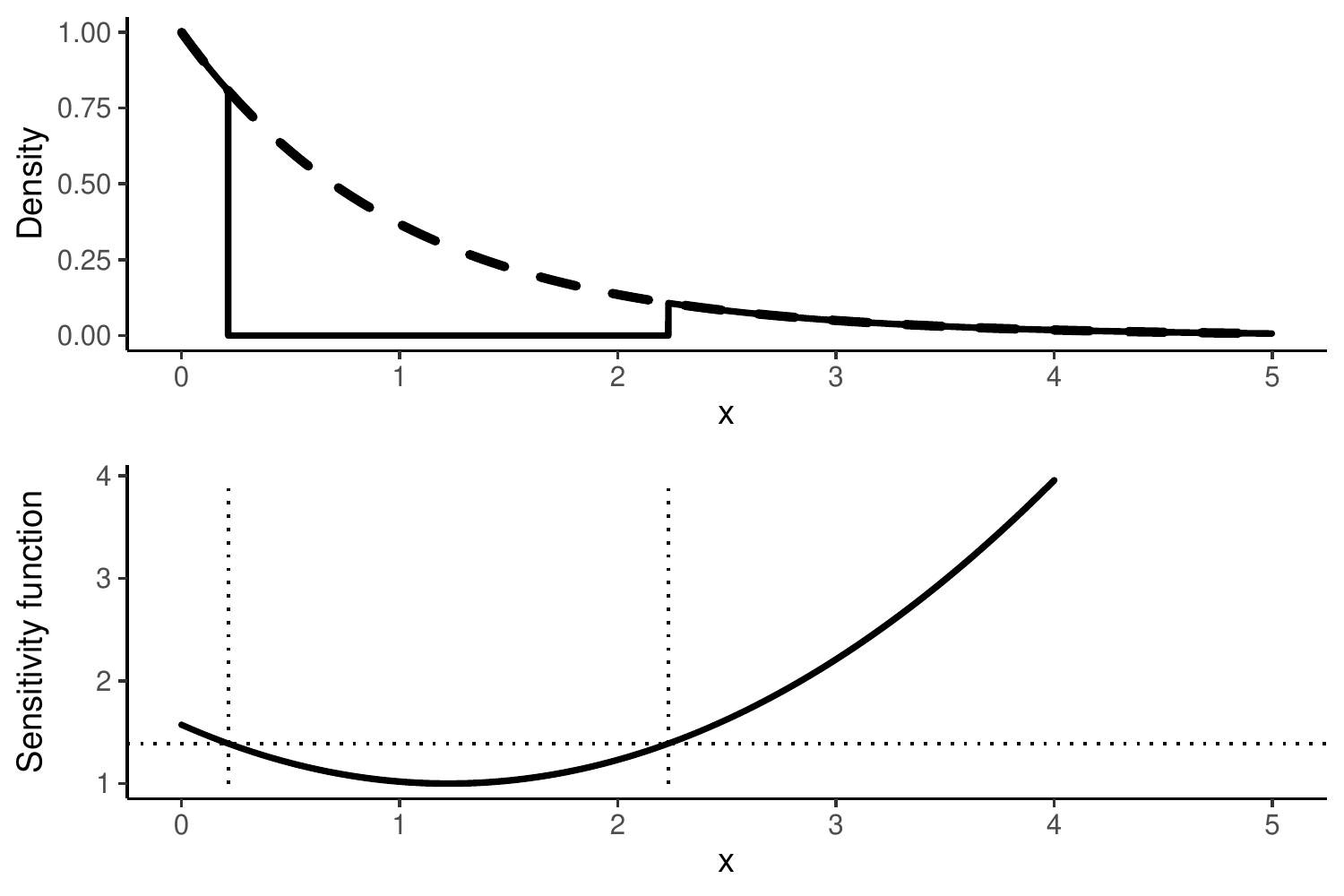}
			\caption{$\alpha = 0.3$.}
		\end{subfigure}
		\caption{Density of the optimal subsampling design (solid line)
			and the standard exponential distribution (dashed line, upper panels),
			and sensitivity functions (lower panels)
			for subsampling proportions $\alpha = 0.5$ (left) and $\alpha = 0.3$ (right)}
		\label{Figure:Exp}
	\end{figure}
	As could have been expected, 
	less mass is assigned to the right tail 
	of the right-skewed distribution
	because observations from the right tail are more influential
	and, thus, more observations seem to be required
	on the lighter left tail for compensation.
	\par
	For $X_i$ having an exponential distribution 
	with general intensity $\lambda > 0 $ (scale $1/\lambda$),
	the optimal boundary points remain to be the 
	same quantiles as in the standard exponential case, 
	$a_1 = a/\lambda$
	and $a_2 = b/\lambda$ associated with the proportion $\alpha$, by 
	Theorem~\ref{th:equivariant}.
\end{example}

\section{Optimal Subsampling for Quadratic Regression}
\label{sec:quadratic}

In the case of quadratic regression
$Y_{i} =\beta_0 + \beta_1 X_{i} + \beta_2 X_{i}^2 + \varepsilon_{i}$ 
we have 
	\begin{align}
		\label{eq:info-quadratic}
	\M(\bar{\xi}) = \begin{pmatrix}
		\alpha     & 0      & m_2(\bar{\xi}) \\
		0          & m_2(\bar{\xi})    & 0   \\
		m_2(\bar{\xi})       & 0      & m_4(\bar{\xi})
	\end{pmatrix},
\end{align}
for the information matrix of a symmetric subsampling design $\bar{\xi}$.
The inverse $\M(\bar{\xi})^{-1}$ of the information matrix is given by
\begin{align*}
	\M(\bar{\xi})^{-1} = \frac{1}{\alpha m_4(\bar{\xi}) - m_2(\bar{\xi})^2}
	\begin{pmatrix}
		m_4(\bar{\xi})     & 0      & -m_2(\bar{\xi}) \\
		0          & \alpha \frac{m_4(\bar{\xi})}{m_2(\bar{\xi})} - m_2(\bar{\xi})    & 0   \\
		-m_2(\bar{\xi})       & 0      & \alpha
	\end{pmatrix},
\end{align*}
and the sensitivity function 
\begin{align}
	\label{eq:sensitivity-quadratic}
	\psi(x, \bar{\xi}) = \frac{1}{\alpha m_4(\bar{\xi}) - m_2(\bar{\xi})^2}
	(m_4(\bar{\xi}) - 3 m_2(\bar{\xi}) x^2 
		+ \alpha \frac{m_4(\bar{\xi})}{m_2(\bar{\xi})} x^2
		+ \alpha x^4)     
\end{align}
is a polynomial of degree four and is symmetric in $x$.

According to Corollary~\ref{cor:opt-design-degree-q-symm},
the density $ f_{\xi^{*}}(x) $ of the $ D $-optimal continuous subsampling design $\xi^*$
has, at most, three intervals that are symmetrically placed around zero, 
where the density is equal to the bounding density $  f_{X}(x) $,
and  $ f_{\xi^{*}}(x) $ is equal to zero elsewhere. 
Thus the density $ f_{\xi^{*}}(x) $ of the $D$-optimal subsampling design
has the shape
\begin{equation}
	\label{eq:density-quadratic}
	f_{\xi^{*}}(x) =  f_{X}(x) \1_{(-\infty,-a]\cup[-b,b]\cup[a,\infty)}(x) \, ,
\end{equation}
where $a > b \geq 0$. We formally allow $b = 0$
which means that $\psi(0,\xi^*) \leq s^{*} = \psi(a,\xi^*)$
and that the density $f_{\xi^*}(x)$ is concentrated
on only two intervals, 
$f_{\xi^{*}}(x) =  f_{X}(x) \1_{(-\infty,-a]\cup[a,\infty)}(x)$.
Although the information matrix will be non-singular 
even in the case of two intervals ($b=0)$,
the optimal subsampling design will include a non-degenerate 
interior interval $[-b, b]$ in many cases, $b > 0$, as illustrated below
in Examples~\ref{ex:quad-normal} and \ref{ex:quad-uniform}. 
However, for a heavy-tailed distribution of the covariate $X_i$,
the interior interval may vanish in the optimal subsampling design
as shown in Example~\ref{ex:quad-tdist}.

To obtain the $D$-optimal continuous subsampling design $\xi^*$
by Corollary~\ref{cor:opt-design-degree-q-symm},
the boundary points $a = a_1$ and $b = a_2 \geq 0$ 
have to be determined 
to solve the two non-linear equations
\begin{equation}
	\label{eq:cond1}
	\Prob(|X_i| \leq b) + \Prob(|X_i| \geq a) = \alpha
\end{equation}
and 
\begin{equation}
	\label{eq:cond2}
	\psi(b, \xi^*) = \psi(a, \xi^*) \, .
\end{equation}
By equation~\eqref{eq:sensitivity-quadratic},
the latter condition can be written as
\begin{equation*}
	\label{eq:quad-cond2a}
	\alpha m_2(\xi^*) b^4 + \left(\alpha m_4(\xi^*)  - 3 m_2(\xi^*)^2\right) b^2
	= \alpha m_2(\xi^*) a^4 + \left(\alpha m_4(\xi^*)  - 3 m_2(\xi^*)^2\right) a^2 \, ,
\end{equation*}
which can be reformulated as
\begin{equation}
	\label{eq:quad-cond2b}
	\alpha m_2(\xi^*) (a^2 + b^2) = \alpha m_4(\xi^*)  - 3 m_2(\xi^*)^2 \, .
\end{equation}

For finding the optimal solution,
we use the Newton method implemented 
in the \textbf{\textsf{R}} package \textit{nleqslv} by \cite{nleqslv} 
to calculate numeric values for $a$ and $b$ based on equations 
\eqref{eq:cond1} and \eqref{eq:cond2}
for various symmetric distributions.

The case $b = 0$ relates to the situation of only two intervals ($r = 1 < q$).
There, condition~\eqref{eq:cond1} simplifies to
$a = q_{1 - \alpha/2}$, where $q_{1 - \alpha/2}$ is the 
$(1 - \alpha/2)$-quantile of the distribution of the covariate $X_i$,
and equation~\eqref{eq:cond2} 
has to be relaxed to $\psi(0, \xi^*) \leq \psi(a, \xi^*)$,
similar to the case $b = 0$ in Example~\ref{ex:lin-exponential}. 

\begin{example}[normal distribution]
	\label{ex:quad-normal}
	For the case that the covariate $X_i$ 
	comes from a standard normal distribution,
	results are given in Table~\ref{Table:Normal} for selected 
	values of $\alpha$. 
	\begin{table}[h]
		\begin{center}
			\caption{Numeric values for the boundary points $a$ and $b$ 
				for selected values of the subsampling proportion $\alpha$ 
				in the case of standard normal $X_i$}
			\begin{tabular}{l|ccccc} 
				\toprule
				{$\alpha$} & {$a$} & {$1 - \Phi(a)$} & {$b$} & {$2\Phi(b) - 1$} 
				& {\% of mass on $[-b,b]$} 
				\\ 
				\midrule
				0.5  & 1.02800 & 0.15198 & 0.24824 & 0.19605 & 39.21 
				\\ 
				0.3  & 1.34789 & 0.08885 & 0.15389 & 0.12231 & 40.77  
				\\
				0.1  & 1.88422 & 0.02977 & 0.05073 & 0.04046 & 40.46 
				\\
				0.01 & 2.73996 & 0.00307 & 0.00483 & 0.00386 & 38.55 
				\\ 
				\bottomrule
			\end{tabular}
			\label{Table:Normal}
		\end{center}
	\end{table}
	Additionally to the optimal values for $a$ and $b$, 
	also the proportions 
	$\Prob(X_i \geq a) = \Prob(X_i \leq -a) = 1 - \Phi(a)$
	and
	$\Prob(-b  \leq X_i \leq b) = 2\Phi(b) - 1$
	are presented in Table~\ref{Table:Normal}
	together with the percentage of mass $(2\Phi(b) - 1) / \alpha$
	allocated to the interior interval $[-b, b]$.
	In Figure~\ref{Figure:Normal}, 
	the density $f_{\xi^{*}}$ of the optimal subsampling design $\xi^*$
	and the corresponding sensitivity function $\psi(x,\xi^*)$ 
	are exhibited for $\alpha = 0.5$ and $\alpha = 0.1$.
	\begin{figure}[htb]
		\centering
		\begin{subfigure}[t]{.475\textwidth}
			\centering
			\includegraphics[width=\linewidth]{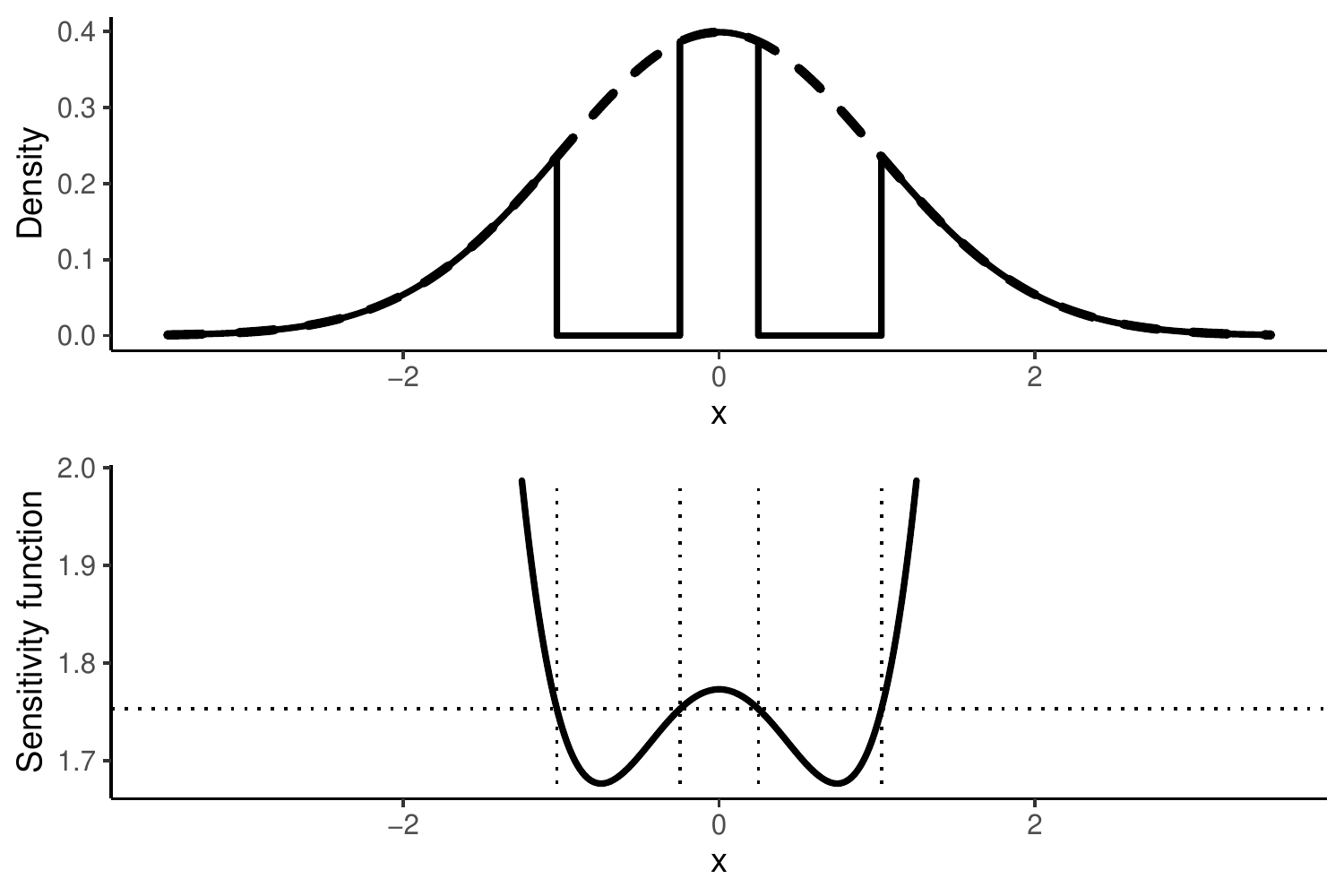}
			\caption{$\alpha = 0.5$}
		\end{subfigure}
		\hfill
		\begin{subfigure}[t]{.475\textwidth}
			\centering
			\includegraphics[width=\linewidth]{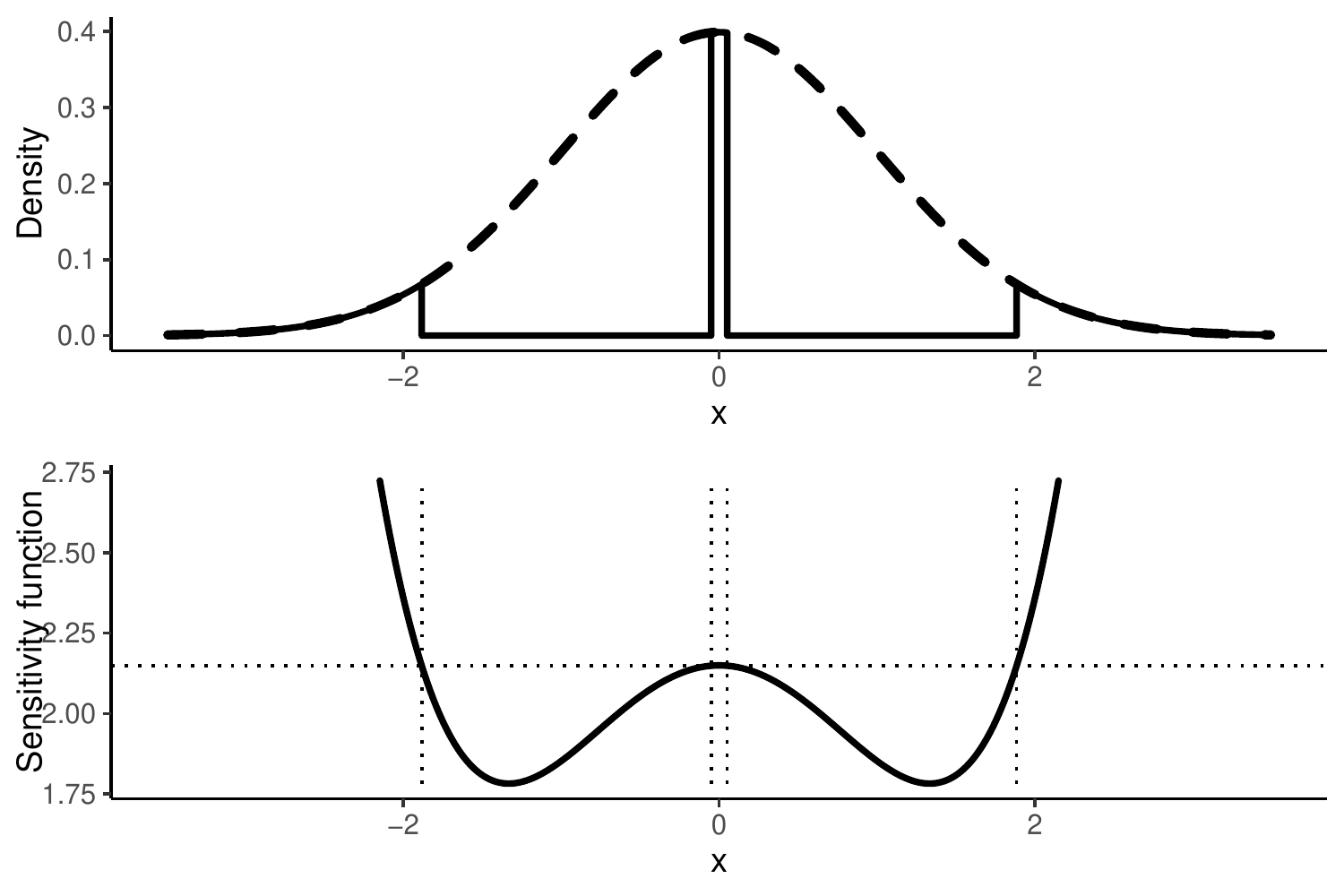}
			\caption{$\alpha = 0.1$}
		\end{subfigure}
		\caption{Density of the optimal subsampling design (solid line)
			and the standard normal distribution (dashed line, upper panels),
			and sensitivity functions (lower panels)
			for subsampling proportions $\alpha = 0.5$ (left) and $\alpha = 0.1$ (right)}
		\label{Figure:Normal}
	\end{figure}
	Vertical lines indicate the positions of the boundary points
	$-a$, $-b$, $b$, and $a$, respectively.	
	In the subplots of the sensitivity function, the dotted horizontal line 
	displays the threshold $s^*$.
	For other values of $\alpha$, the plots are looking similar.
	\par
	The numerical results in Table~\ref{Table:Normal} suggest 
	that the interior interval $[-b, b]$ does not vanish 
	for any $\alpha$ ($0 < \alpha < 1$). 
	This will be established in the following theorem.
	\begin{theorem}
		\label{Theo:normal}
		In quadratic regression with standard normal 
		covariate $X_i$,
		for any subsampling proportion $ \alpha \in (0,1) $, 
		the $D$-optimal subsampling design $\xi^{*}$ 
		has density
		$f_{\xi^{*}}(x) =  f_{X}(x) \1_{(-\infty, -a]\cup[-b, b]\cup[a, \infty)}(x)$
		with $a > b > 0$.
	\end{theorem}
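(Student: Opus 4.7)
The plan is to proceed by contradiction: assume the optimal design has $b = 0$ and refute the relaxed optimality condition $\psi(0,\xi^*) \leq \psi(a,\xi^*)$ (which, as noted after \eqref{eq:quad-cond2b}, replaces \eqref{eq:cond2} in this degenerate case) for every $\alpha \in (0,1)$. Once $b = 0$ is imposed, condition~\eqref{eq:cond1} forces $a = z_{1 - \alpha/2}$, so the candidate design is fully determined and only a one-variable inequality in $a > 0$ has to be refuted.

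By the standard integration-by-parts identities for the normal distribution, writing $\phi := \phi(a)$ and $\bar\Phi := 1 - \Phi(a) = \alpha/2$, the moments of this candidate are $m_2(\xi^*) = 2a\phi + \alpha$ and $m_4(\xi^*) = 2(a^3 + 3a)\phi + 3\alpha$. Substituting into \eqref{eq:sensitivity-quadratic} and clearing the positive factor $\alpha m_4 - m_2^2 = \det\M(\xi^*)/m_2$, the inequality $\psi(0,\xi^*) \leq \psi(a,\xi^*)$ becomes $\alpha(m_4 + m_2 a^2) \geq 3 m_2^2$. Direct expansion, using $\alpha = 2\bar\Phi$, collapses the difference to
\begin{equation*}
3 m_2^2 - \alpha(m_4 + m_2 a^2) \;=\; 4a\bigl[\,3a\phi^2 + (3 - 2a^2)\phi\bar\Phi - a\bar\Phi^2\,\bigr],
\end{equation*}
so it suffices to show that the bracket is strictly positive for all $a > 0$.

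Dividing by $\phi^2$ and introducing the Mills ratio $R(a) := \bar\Phi/\phi$, the required inequality becomes $g(R(a)) > 0$, where $g(r) := -a r^2 + (3 - 2a^2) r + 3a$ is a concave quadratic in $r$. Direct evaluation gives $g(0) = 3a > 0$ and $g(1/a) = -1/a + 3/a - 2a + 3a = a + 2/a > 0$, so by concavity $g$ is strictly positive throughout the interval $[0, 1/a]$. The classical Mills bound
\begin{equation*}
R(a) \;=\; \int_0^\infty e^{-a t - t^2/2}\,\diff t \;<\; \int_0^\infty e^{-a t}\,\diff t \;=\; 1/a
\end{equation*}
places $R(a)$ in $(0, 1/a)$, so $g(R(a)) > 0$, contradicting $\psi(0,\xi^*) \leq \psi(a,\xi^*)$. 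Hence $b > 0$, and since the parameterization~\eqref{eq:density-quadratic} already enforces $a > b$, we obtain $a > b > 0$ as claimed.

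The main obstacle is the algebraic collapse in the second paragraph: the two sensitivity values $\psi(0,\xi^*)$ and $\psi(a,\xi^*)$, after substitution of the normal moment identities, must telescope into the clean concave quadratic $g$ in the Mills ratio displayed above. Once that factorization is in place, the two endpoint checks $g(0) > 0$ and $g(1/a) > 0$ together with the elementary Mills bound $R(a) < 1/a$ finish the argument in a few lines, and no further quantile-specific estimates are needed.
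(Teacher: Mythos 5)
Your proposal is correct, and its skeleton is the same as the paper's: consider the tails-only candidate $\xi'$ with $a = z_{1-\alpha/2}$ (forced by \eqref{eq:cond1} when $b=0$), and derive a contradiction with the equivalence theorem by showing $\psi(0,\xi') > \psi(a,\xi')$. Your moment identities $m_2 = 2a\phi + \alpha$, $m_4 = 2(a^3+3a)\phi + 3\alpha$ agree with the paper's, the reduction of $\psi(0)\le\psi(a)$ to $\alpha(m_4 + m_2a^2) \ge 3m_2^2$ is right (the cleared factor $\alpha m_4 - m_2^2 = \det\M(\xi')/m_2$ is indeed positive), and I have checked that the difference does collapse to $4a\bigl[3a\phi^2 + (3-2a^2)\phi\bar\Phi - a\bar\Phi^2\bigr]$. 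Where you genuinely depart from the paper is in how the positivity of this expression (the paper's $c(\alpha)$) is established. The paper asserts that $c(\alpha)$ has no roots in $(0,1)$ and verifies $c(0.1)>0$, i.e.\ the key inequality is settled by a root-counting claim plus a numerical spot check. You instead prove it analytically: the bracket, divided by $\phi^2$, is a concave quadratic $g(r) = -ar^2 + (3-2a^2)r + 3a$ in the Mills ratio $r = R(a)$, positive at $r=0$ and $r=1/a$, hence positive on $[0,1/a]$, and the elementary bound $R(a) = \int_0^\infty e^{-at - t^2/2}\,\diff t < 1/a$ places $R(a)$ in that interval. This buys a fully self-contained, computer-free proof of the inequality for every $a>0$ (equivalently every $\alpha\in(0,1)$), which is strictly stronger than what the paper writes down; the paper's version buys brevity. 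The only cosmetic point: rather than a "proof by contradiction from $b=0$", it is cleaner to phrase it as the paper does — by Corollary~\ref{cor:opt-design-degree-q-symm} the only alternative to three proper intervals is the two-interval design $\xi'$, and $\xi'$ violates condition (iii) of Theorem~\ref{th:equivalence} at $x=0$ — but this is exactly what your argument shows.
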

	\par
	For $X_i$ having a general normal distribution 
	with mean $\mu$ and variance $\sigma^2$,
	the optimal boundary points remain to be the 
	same quantiles as in the standard normal case, 
	$a_1, a_4 = \mu \pm \sigma a$
	and $a_2, a_3 = \mu \pm \sigma b$, by 
	Theorem~\ref{th:equivariant}.
\end{example}

\begin{example}[uniform distribution]
	\label{ex:quad-uniform}
	If the covariate $X_i$ is uniformly distributed on $[-1,1]$ 
	with density $ f_{X}(x) = \frac{1}{2}\1_{[-1,1]}(x)$,
	we can obtain analytical results for the dependence 
	of the subsampling design on the proportion $ \alpha $ to be selected. 

	The distribution of $X_i$ is symmetric.
	By Corollary~\ref{cor:opt-design-degree-q-symm},
	the density of the $D$-optimal continuous subsampling design $\xi^*$ 
	has the shape
	\begin{equation}
		\label{eq:dens-unif}
		f_{\xi^{*}}(x) = \frac{1}{2} \1_{[-1,-a] \cup [-b,b] \cup [a,1]}(x) \, ,
	\end{equation}
	where we formally allow $a = 1$ or $b = 0$
	resulting in only one or two intervals of support.
	The relevant entries in the information matrix $\M(\xi^{*})$
	are $m_2(\xi^*) = \frac{1}{3}(1 - a^3 + b^3)$ 
	and $m_4(\xi^*) = \frac{1}{5}(1 - a^5 + b^5)$.
	If, in Corollary~\ref{cor:opt-design-degree-q-symm}, 
	the boundary points $a_1$ and $a_2$ satisfy 
	$a_1 \leq 1$ and $a_2 \geq 0$, then
	$a = a_1$ and $b = a_2$ are the solution of the two equations 
	$a - b = 1 - \alpha$
	and
	$\alpha m_2(\xi^*) (a^2 + b^2) = \alpha m_4(\xi^*)  - 3 m_2(\xi^*)^2$
	arising from conditions~\eqref{eq:cond1} and \eqref{eq:quad-cond2b}.
	On the other hand, if there exist solutions $a$ and $b$
	of these equations such that $0 < b <  a < 1$,
	then these are the boundary points in the 
	representation~\eqref{eq:dens-unif},
	and the density of the optimal subsampling design is
	supported by three proper intervals.
	Solving the two equations results in
	\begin{align}
		\label{eq:unifa}
		a(\alpha) = \frac{1}{2} & (1 - \alpha) + \Biggl( \frac{1}{180(1 - \alpha)} 
		\Bigl(45 - 15\alpha + 15\alpha^2 -45\alpha^3 + 20\alpha^4 \notag
		\\
		& \quad \mbox{} - 4\alpha \sqrt{5} 
		\sqrt{45 - 90\alpha + 90\alpha^2 - 75\alpha^3 + 57\alpha^4 - 27\alpha^5 + 5\alpha^6} \, \Bigr) \Biggr)^{1/2} 
	\end{align}
	and
	\begin{equation}
		\label{eq:unifb}
		b(\alpha) = a(\alpha) - (1 - \alpha)
	\end{equation}
	for the dependence of $a$ and $b$ on $\alpha$.
	The values of $a$ and $b$ are plotted in Figure~\ref{Figure:a-and-b}. 
	\begin{figure}[h]
		\centering
		\includegraphics[width=.5\textwidth]{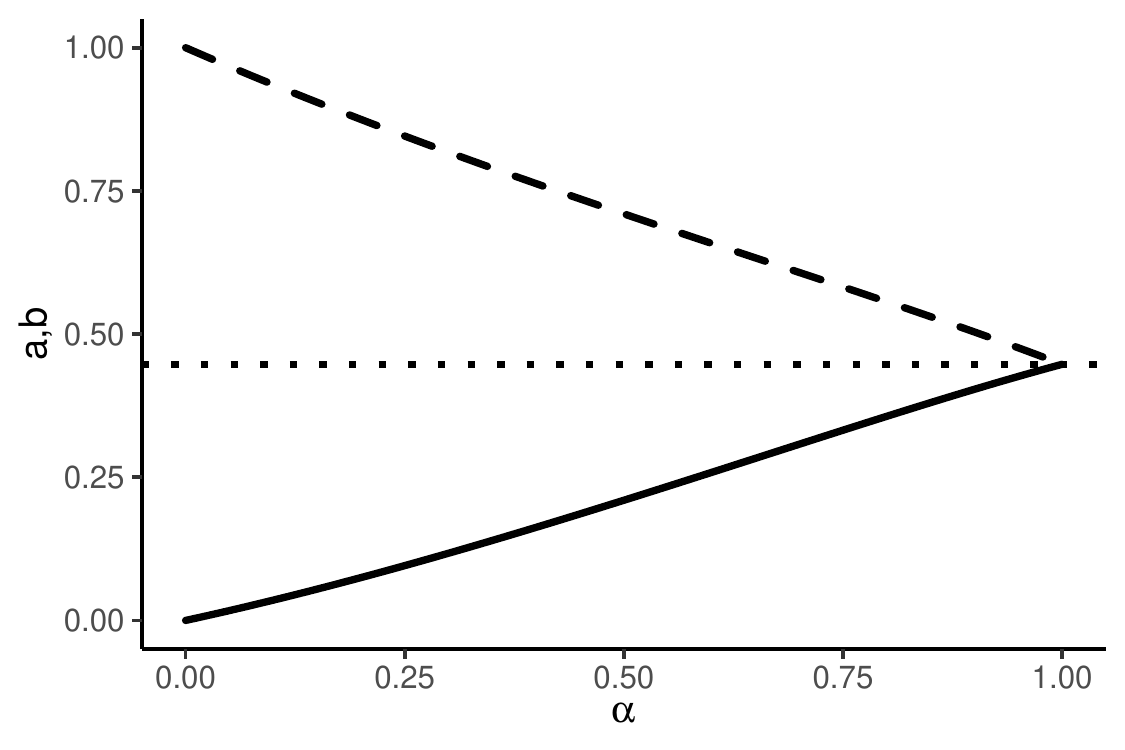}
		\caption{Boundary points $a$ (dashed) and $b$ (solid) of the $ D $-optimal subsampling design in the case of uniform $ X_{i} $ on $ [-1,1] $ as functions of $\alpha$}
		\label{Figure:a-and-b}
	\end{figure}
	There it can be seen that $0 < a < b < 1$ for all $\alpha$ and 
	that $a$ and $b$ both tend to $1/\sqrt{5}$ 
	as $\alpha$ tends to $1$.
	Similar to the case of the normal distribution, 
	the resulting values and illustrations 
	are given in Table~\ref{Table:Unif} and Figure~\ref{Figure:Unif}. 
	Note that the mass of the interior interval 
	$ \Prob(-b \leq X_i \leq b) $ 
	is equal to $ b $ itself as $ X_{i} $ is uniformly distributed on $ [-1,1] $.
	\begin{table}[h]
		\begin{center}
			\caption{Values for the boundary points $a$ and $b$ 
				for selected values of the subsampling proportion $\alpha$ 
				in the case of uniform $X_i$ on $[-1, 1]$}
			\begin{tabular}{l|cccc} \toprule
				{$\alpha$} & {$a$} & {$\Prob(X_i \geq a)$} & {$b = \Prob(-b \leq X_i \leq b)$} 
					& {\% of mass on $[-b,b]$} \\ \midrule
				0.5  & 0.70983 & 0.14508 & 0.20983 & 41.97 \\ 
				0.3  & 0.81737 & 0.09132 & 0.11737 & 39.12  \\ 
				0.1  & 0.93546 & 0.03227 & 0.03546 & 35.46 \\ 
				0.01 & 0.99336 & 0.00332 & 0.00336 & 33.55 \\ \bottomrule 
			\end{tabular}
			\label{Table:Unif}
		\end{center}
	\end{table}
	\begin{figure}[htb]
		\centering
		\begin{subfigure}[t]{.475\textwidth}
			\centering
			\includegraphics[width=\linewidth]{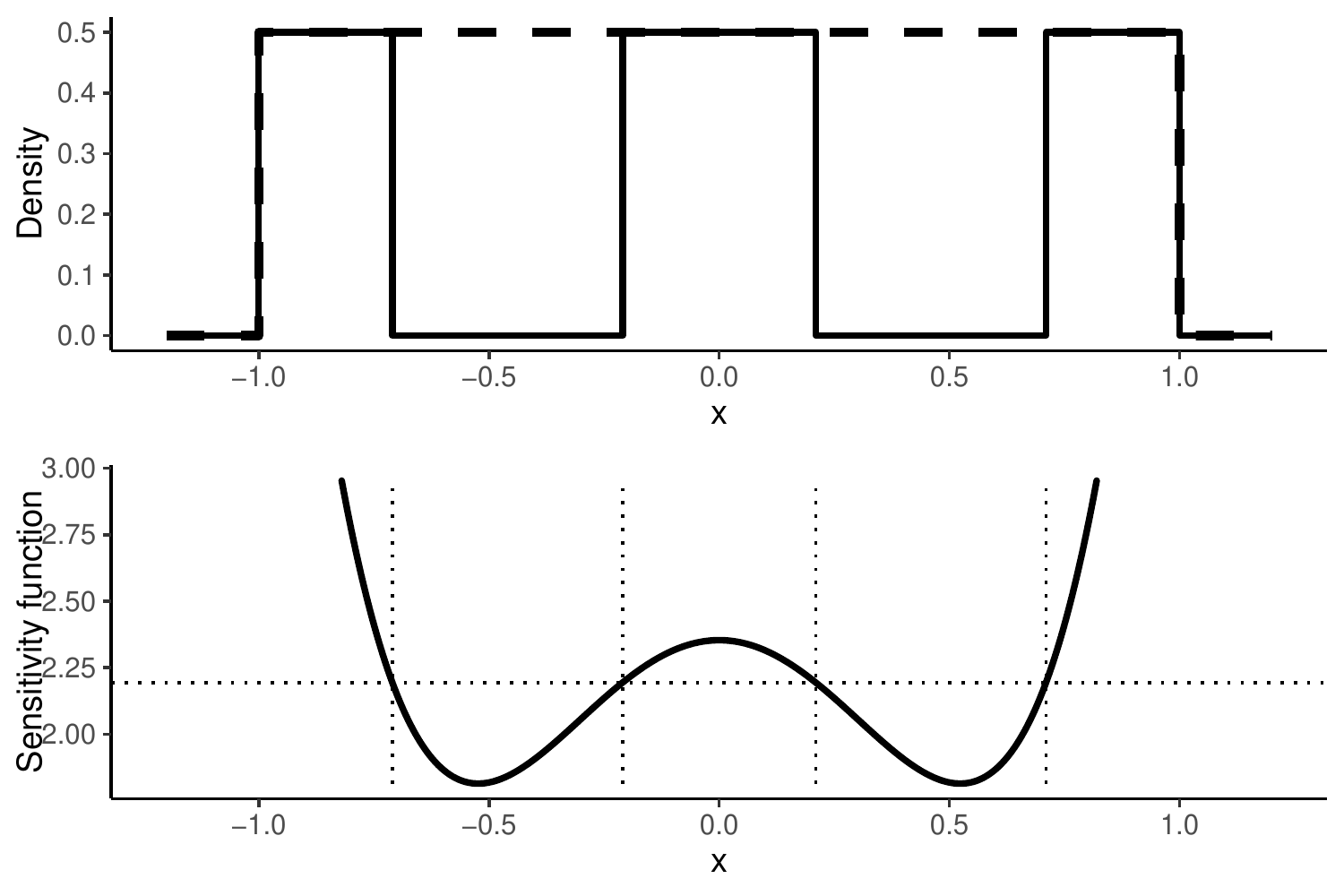}
			\caption{$\alpha = 0.5$}
		\end{subfigure}
		\hfill
		\begin{subfigure}[t]{.475\textwidth}
			\centering
			\includegraphics[width=\linewidth]{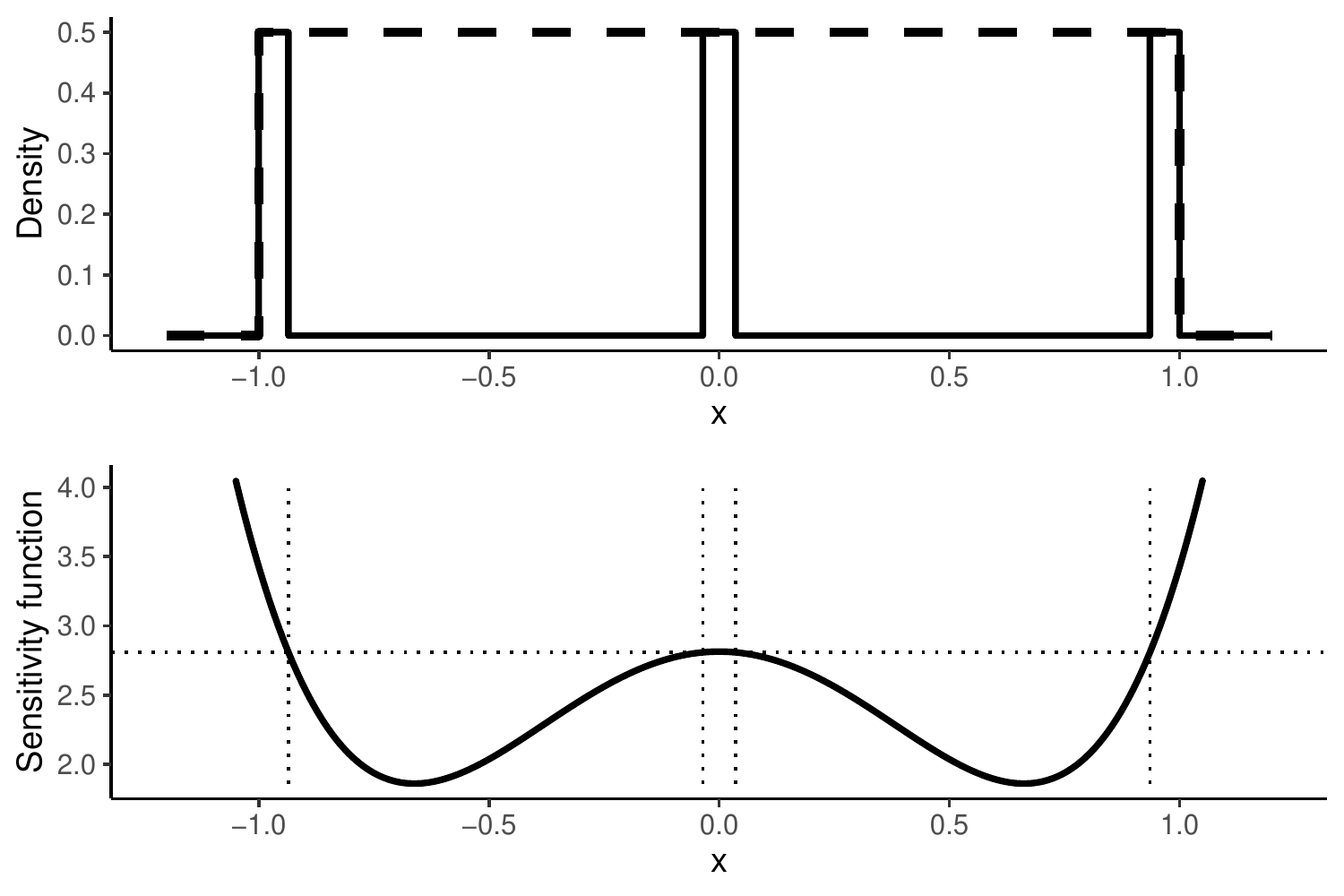}
			\caption{$\alpha = 0.1$}
		\end{subfigure}
		\caption{Density of the optimal subsampling design (solid line)
		and the uniform distribution on $[-1,1]$ (dashed line, upper panels),
		and sensitivity functions (lower panels)
		for subsampling proportions $\alpha = 0.5$ (left) and $\alpha = 0.1$ (right)}
		\label{Figure:Unif}
	\end{figure}
	Also here, in Figure~\ref{Figure:Unif}, 
	vertical lines indicate the positions of the boundary points
	$-a$, $-b$, $b$, and $a$, 
	and the dotted horizontal line displays the threshold $s^*$.
	Moreover, the percentage of mass at the different intervals is
	displayed in Figure~\ref{Figure:Percentage-a-and-b}.
	\begin{figure}[htb]
		\centering
		\begin{subfigure}[t]{.475\textwidth}
			\centering
			\includegraphics[width=\linewidth]{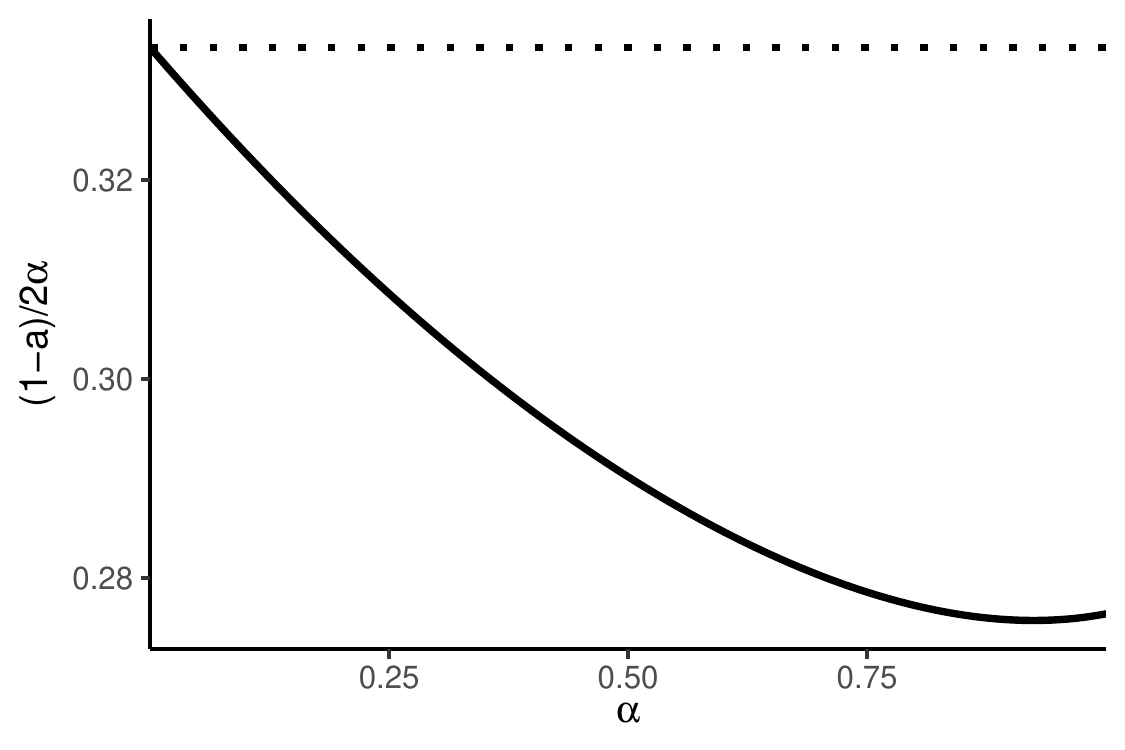}
		\end{subfigure}
		\hfill
		\begin{subfigure}[t]{.475\textwidth}
			\centering
			\includegraphics[width=\linewidth]{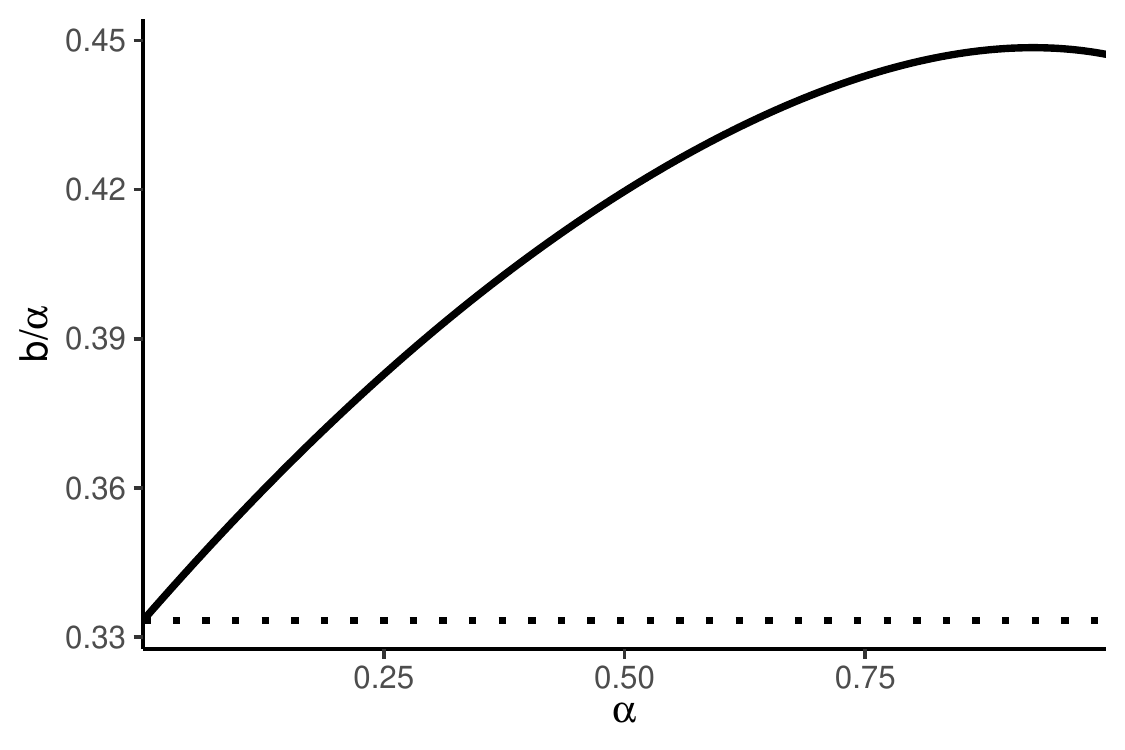}
		\end{subfigure}
		\caption{Percentage of mass on the support intervals $[a,1]$ (left) and $[-b,b]$ (right) of the $ D $-optimal subsampling design in the case of uniform $ X_{i} $ on $ [-1,1] $ as a function of $\alpha$}
		\label{Figure:Percentage-a-and-b}
	\end{figure}
	
	The results in Table~\ref{Table:Unif} and Figure~\ref{Figure:Percentage-a-and-b} suggest 
	that the percentage of mass on all three intervals 
	$[-1,-a]$, $[-b,b]$, and $[a,1]$ tend to $1/3$ as $\alpha$ tends to $0$. 
	We establish this in the following theorem. 
	\begin{theorem}
		\label{th:unif-alpha-to-0}
		In quadratic regression with  
		covariate $X_i$ uniformly distributed on $[-1,1]$, let 
		$ \xi_{\alpha}^{*} $ be the optimal subsampling design for subsampling proportion $\alpha$, 
		$0 < \alpha < 1$, defined in equations~\eqref{eq:unifa} and \eqref{eq:unifb}. 
		Then $ \lim_{\alpha \to 0} \xi_{\alpha}^{*}([-b,b]) / \alpha = 1/3 $.
	\end{theorem}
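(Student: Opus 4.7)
The plan is to reduce the claim to a first-order Taylor expansion of the explicit formula (\ref{eq:unifa}) for $a(\alpha)$ near $\alpha=0$, then read off the behavior of $b(\alpha)$ from (\ref{eq:unifb}).

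First I would observe that since $X_i$ is uniformly distributed on $[-1,1]$, the density of the optimal subsampling design on $[-b,b]$ equals $\tfrac{1}{2}$ and hence $\xi^*_\alpha([-b,b]) = \int_{-b}^{b} \tfrac{1}{2}\diff x = b(\alpha)$. Using the identity $b(\alpha)=a(\alpha)-(1-\alpha)$ from \eqref{eq:unifb}, the goal becomes showing
\begin{equation*}
\frac{b(\alpha)}{\alpha} = 1 - \frac{1-a(\alpha)}{\alpha} \longrightarrow \frac{1}{3} \qquad (\alpha\to 0),
\end{equation*}
which is equivalent to $(1-a(\alpha))/\alpha\to 2/3$.

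Second, I would Taylor-expand the right-hand side of \eqref{eq:unifa}. Write $a(\alpha)=\tfrac{1}{2}(1-\alpha)+\sqrt{T(\alpha)}$, where $T(\alpha)$ denotes the bracketed argument of the outer square root, and let $Q(\alpha)=45 - 90\alpha + 90\alpha^2 - 75\alpha^3 + 57\alpha^4 - 27\alpha^5 + 5\alpha^6$. At $\alpha=0$ the numerator equals $45$ and $180(1-\alpha)=180$, giving $T(0)=\tfrac{1}{4}$ and $\sqrt{T(0)}=\tfrac{1}{2}$, consistent with $a(0)=1$. For the first-order term, the polynomial part of the numerator contributes derivative $-15$, while $\frac{d}{d\alpha}\bigl[4\alpha\sqrt{5}\sqrt{Q(\alpha)}\bigr]_{\alpha=0}=4\sqrt{5}\cdot\sqrt{Q(0)}=4\sqrt{5}\cdot 3\sqrt{5}=60$, so the numerator has derivative $-15-60=-75$ at $0$. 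Combined with the geometric-series expansion $(1-\alpha)^{-1}=1+\alpha+O(\alpha^2)$, this yields
\begin{equation*}
T(\alpha) = \frac{1}{180}\bigl(1+\alpha+O(\alpha^2)\bigr)\bigl(45 - 75\alpha + O(\alpha^2)\bigr) = \frac{1}{4} - \frac{\alpha}{6} + O(\alpha^2).
\end{equation*}

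Third, expanding the outer square root gives $\sqrt{T(\alpha)} = \tfrac{1}{2} - \tfrac{\alpha}{6} + O(\alpha^2)$, whence
\begin{equation*}
a(\alpha) = \tfrac{1}{2}(1-\alpha) + \tfrac{1}{2} - \tfrac{\alpha}{6} + O(\alpha^2) = 1 - \tfrac{2}{3}\alpha + O(\alpha^2),
\end{equation*}
so $(1-a(\alpha))/\alpha \to 2/3$ and $b(\alpha)/\alpha \to 1/3$, which is the claim. The only non-routine step is handling the nested radical when computing $T'(0)$; everything afterwards is elementary algebra. An alternative that avoids invoking \eqref{eq:unifa} would be to set $u(\alpha)=(1-a)/\alpha$ and use $u+b/\alpha=1$ together with the expansions $m_2=m_4=u\alpha+O(\alpha^2)$ in the moment equation \eqref{eq:quad-cond2b}, obtaining the limiting relation $3u^2=2u$ and selecting the root $u=2/3$ (the root $u=0$ is ruled out since it corresponds to placing no mass on the outer intervals, which is not $D$-optimal as $\xi^*$ must have full-rank information matrix with three distinct support regions in the limit, or alternatively by direct comparison of determinants).
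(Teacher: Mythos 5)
Your proof is correct and takes essentially the same route as the paper: the paper computes $b'(0)=1/3$ by differentiating the closed-form expression $b(\alpha)=\sqrt{u(\alpha)/v(\alpha)}-\tfrac{1}{2}(1-\alpha)$ at $\alpha=0$ (arriving at the same intermediate values $u(0)=45$, $u'(0)=-75$, $v(0)=180$ that appear in your expansion) and then identifies $\lim_{\alpha\to0}b(\alpha)/\alpha$ with $b'(0)$, which is just the derivative-form restatement of your first-order Taylor expansion of $a(\alpha)=1-\tfrac{2}{3}\alpha+O(\alpha^2)$.
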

	It is worth-while mentioning that the percentages of mass 
	displayed in Figure~\ref{Figure:Percentage-a-and-b} are not monotonic over the whole
	range of $ \alpha \in (0,1) $, as, for example the percentage of mass at the interior 
	interval $[-b, b]$ is increasing from $0.419666$ at $b = 0.50$ to
	$0.448549$ at $b = 0.92$ and then slightly decreasing back again to 
	$0.447553$ at $b = 0.99$.
	
	Finally, it can be checked that, for all $\alpha$, the solutions
	satisfy $0 < b < a < 1$ such that the optimal subsampling designs
	are supported on three proper intervals.
\end{example}

In the two preceding examples it could be noticed
that the mass of observations is of comparable size
for the three supporting intervals in the case 
of a normal and of a uniform distribution
with light tails.
This may be different in the case of a 
heavy-tailed distribution
for the covariate $ X_{i} $ as the $t$-distribution.
\begin{example}[$ t $-distribution]
	\label{ex:quad-tdist}
	For the case that the covariate $X_i$ 
	comes from a $ t $-distribution with $ \nu $ degrees of freedom, 
	we observe a behavior which differs substantially from the normal case 
	of Example~\ref{ex:quad-normal}.
	The interior interval typically has less mass than the outer intervals 
	and may vanish for some values of $ \alpha $.
	We show this in the case of the least possible number $ \nu = 5 $ 
	of degrees of freedom to maintain an existing fourth moment,
	which appears in the information matrix of the 
	$ D $-optimal continuous subsampling design $ \xi^{*} $
	while maximizing the dispersion.

	\begin{theorem}
		\label{Theo:t5}
		In quadratic regression with $t$-distributed covariate
		$X_i \sim t_{5}$ with five degrees of freedom, 
		there is a critical value 
		$\alpha^* \approx 0.082065$ of the subsampling proportion $ \alpha $ such that
		the $D$-optimal subsampling design $\xi^{*}$ has  
		\begin{itemize}
			\item[(i)] 
				density
				$f_{\xi^{*}}(x) =  f_{X}(x) \1_{(-\infty,-a]\cup[-b,b]\cup[a,\infty)}(x)$
				with $ a > b > 0 $ for $ \alpha < \alpha^* $. 
			\item[(ii)] 
				density
				$f_{\xi^{*}}(x) 
					=  f_{X}(x) \1_{(-\infty,-t_{5,1-\alpha/2}]\cup[t_{5,1-\alpha/2},\infty)}(x)$,
				where $ t_{5,1-\alpha/2} $ is the $(1 - \alpha/2)$-quantile of the 
				$ t_5 $-distribution, for $ \alpha \geq \alpha^* $. 
		\end{itemize}
	\end{theorem}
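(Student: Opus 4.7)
The plan is to characterize $\xi^{*}$ through the KKT-type conditions of Theorem~\ref{th:opt-design-degree-q} applied to the symmetric candidates of Corollary~\ref{cor:opt-design-degree-q-symm}, reducing the two-versus-three-interval dichotomy to a single scalar sign condition. By~\eqref{eq:sensitivity-quadratic}, $\psi(x, \xi^{*})$ is a quadratic in $y = x^{2}$ with positive leading coefficient $\alpha/(\alpha m_{4}(\xi^{*}) - m_{2}(\xi^{*})^{2})$, and the sign of its linear coefficient coincides with that of $\alpha m_{4}(\xi^{*}) - 3 m_{2}(\xi^{*})^{2}$: a non-negative linear coefficient makes $\psi$ monotone in $|x|$ and constrains the support in~\eqref{eq:density-quadratic} to only two symmetric outer intervals, while a negative one places an interior minimum of $\psi$ at some $y > 0$ and admits the full three-interval shape with $b > 0$.

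I then focus on the two-interval candidate ($b = 0$): by~\eqref{eq:cond1} one has $a = t_{5, 1 - \alpha/2}$ and the truncated moments are $m_{j}(\xi^{*}) = 2 \int_{a}^{\infty} x^{j} f_{X}(x)\diff x$ for $j = 2, 4$. The KKT condition of Theorem~\ref{th:opt-design-degree-q}(ii)--(iii) reduces to $\psi(0, \xi^{*}) \leq \psi(a, \xi^{*})$, which by~\eqref{eq:sensitivity-quadratic} is equivalent to
\[
G(\alpha) := \alpha\bigl(m_{4}(\xi^{*}) + a^{2} m_{2}(\xi^{*})\bigr) - 3 m_{2}(\xi^{*})^{2} \geq 0.
\]
Using $\E(X^{2}) = 5/3$ and $\E(X^{4}) = 25$ for $X \sim t_{5}$, one obtains $\lim_{\alpha \to 1^{-}} G(\alpha) = 25 - 25/3 = 50/3 > 0$. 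Conversely, the Pareto-type tail $f_{X}(x) \sim \tfrac{1000}{3\sqrt{5}\pi} x^{-6}$ as $|x| \to \infty$ yields $\alpha \sim c_{1} a^{-5}$, $m_{2}(\xi^{*}) \sim c_{2} a^{-3}$ and $m_{4}(\xi^{*}) \sim c_{3} a^{-1}$ with explicit positive constants $c_{1}, c_{2}, c_{3}$, and a leading-order calculation shows $G(\alpha) < 0$ as $\alpha \to 0^{+}$. Continuity of $G$ then supplies a zero in $(0, 1)$, which the Newton method in \textit{nleqslv} locates at $\alpha^{*} \approx 0.082065$.

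Parts~(i) and~(ii) now follow. For $\alpha > \alpha^{*}$, $G(\alpha) > 0$ combined with the upward-opening quadratic shape of $\psi(\cdot, \xi^{*})$ yields $\psi(x, \xi^{*}) \geq s^{*} := \psi(a, \xi^{*})$ on $|x| \geq a$ and $\psi(x, \xi^{*}) < s^{*}$ on $|x| < a$, verifying Theorem~\ref{th:opt-design-degree-q} for the two-interval design; the borderline $\alpha = \alpha^{*}$ is absorbed by continuity, proving (ii). For $\alpha < \alpha^{*}$, the two-interval candidate violates KKT, so $\xi^{*}$ must contain a non-degenerate interior interval $[-b, b]$; its boundary points $0 < b < a$ are obtained as the unique solution of~\eqref{eq:cond1} together with $\psi(a, \xi^{*}) = \psi(b, \xi^{*})$, with existence from continuation by the implicit function theorem applied at the degenerate three-interval solution $b = 0$, $\alpha = \alpha^{*}$, $G = 0$, and uniqueness from the strict concavity of $\log \det \M(\xi)$ on symmetric subsampling designs, proving (i).

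The main obstacle is showing that $G$ has exactly one sign change on $(0,1)$, so that $\alpha^{*}$ is unique and the two regimes are cleanly separated. An analytical proof requires differentiating $G$ in $\alpha$ through the implicit relation $a = t_{5, 1-\alpha/2}$ and the dependence of the incomplete $t_{5}$-moments on $a$, quantities that only admit closed forms through repeated integration by parts; in practice the endpoint sign analysis above is most naturally complemented by the numerical monitoring of $G$ via \textit{nleqslv} that the paper employs elsewhere.
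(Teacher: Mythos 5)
Your proposal is correct and follows essentially the same route as the paper: both reduce the question to the two-interval candidate $\xi'$ with $a=t_{5,1-\alpha/2}$ and test the Kuhn--Tucker condition by comparing $\psi(0,\xi')$ with $\psi(a,\xi')$, your $G(\alpha)$ being a positive multiple of $-c(\alpha)$ in the paper's proof, with the single crossing at $\alpha^*\approx 0.082065$ established numerically in both cases. Your endpoint sign analysis ($G\to 50/3$ as $\alpha\to 1$ and $G<0$ as $\alpha\to 0$ via the Pareto tail) is a small genuine addition that guarantees existence of at least one crossing analytically, whereas the paper relies entirely on the plotted curve; neither argument proves uniqueness of the crossing analytically, and you correctly flag this.
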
  

	For illustration, numerical results are given in Table~\ref{Table:t5}. 
	The percentage of mass on the interior interval $ [-b,b] $ is equal to zero for all larger values of $ \alpha $
	as stated in Theorem~\ref{Theo:t5}. 
	The percentage of mass on $ [-b,b] $ decreases with increasing subsampling proportion $ \alpha $ 
	before vanishing entirely.
	\begin{table}[h]
		\begin{center}
			\caption{Values for the boundary points $a$ and $b$ 
					for selected values of the subsampling proportion $\alpha$ 
					in the case of $ t_{5} $-distributed $X_i$}
			\begin{tabular}{l|ccccc} \toprule
				{$\alpha$} & {$a$} & {$\Prob(X_i \geq a)$} & {$b$} & {$\Prob(-b \leq X_i \leq b)$} 
				& {\% of mass on $[-b,b]$} \\ \midrule
				0.10  & 2.01505 & 0.05000 & 0\phantom{.00202} & 0\phantom{.00153} 
					& \phantom{1}0\phantom{.03} \\ 
				0.07  & 2.31512 & 0.03423 & 0.00202 & 0.00153 & \phantom{1}2.03  \\ 
				0.03  & 3.09141 & 0.01356 & 0.00380 & 0.00288 & \phantom{1}4.74 \\ 
				0.01  & 4.18942 & 0.00429 & 0.00187 & 0.00142 & 14.23 \\ \bottomrule 
			\end{tabular}
			\label{Table:t5}
		\end{center}
	\end{table}

Further calculations provide that the critical value $ \alpha^{*} $, 
where a the $ D $-optimal subsampling design switches from a 
three-interval support to a two-interval support, increases with the number of degrees $ \nu $ of freedom of the 
$ t $-distribution and converges to one when $ \nu $ tends to infinity.
This is in accordance with the results for the normal distribution in Example~\ref{ex:quad-normal} as the $ t $-distribution converges in distribution to a standard normal distribution for $\nu \to \infty$.
We have given numeric values for the crossover points for selected degrees of freedom in Table~\ref{Table:t5-crossover},
where $\nu = \infty$ relates to the normal distribution.
The corresponding value $\alpha^* = 1$ indicates 
that the $D$-optimal subsampling design is supported 
by three intervals for all $\alpha$ in this case.
	\begin{table}[h]
		\begin{center}
			\caption{Values of the critical value $ \alpha^{*} $ for selected degrees of 
			freedom $ \nu $ of the $ t $-distribution}
			\begin{tabular}{l|ccccccc} \toprule
				{$\nu$} & {$5$} & {$6$} & {$7$} & {$8$}	& {$9$} & {$30$} & {$\infty$} 
				\\ 
				\midrule
				$\alpha^{*}$  & 0.08207 & 0.34670 & 0.50374 & 0.60125 & 0.66670 & 0.92583 & 1
				\\ 
				\bottomrule 
			\end{tabular}
			\label{Table:t5-crossover}
		\end{center}
	\end{table}
\end{example}

\section{Efficiency}
\label{sec:efficiency}
To exhibit the gain in using a $D$-optimal subsampling design
compared to random subsampling, we consider the 
performance of
the uniform random subsampling design $\xi_{\alpha}$ of size $\alpha$,
which has density $f_{\xi_{\alpha}}(x) = \alpha f_X(x)$,
compared to the $D$-optimal subsampling design 
$ \xi_{\alpha}^{*} $ with mass $ \alpha$.

More precisely, the $D$-efficiency  
of any subsampling design $ \xi $ with mass $\alpha$ is defined as
\begin{equation*}
	\label{eq:efficiency}
	\eff_{D,\alpha}(\xi) = \left(\frac{\det(\M(\xi))}{\det(\M(\xi_{\alpha}^{*}))}\right)^{1/p},
\end{equation*}
where $ p $ is the dimension of the parameter vector $\B$. 
For this definition the homogeneous version $(\det(\M(\xi)))^{1/p}$
of the $D$-criterion is used which satisfies the homogeneity condition
 $(\det(\lambda \M(\xi)))^{1/p} = \lambda (\det(\M(\xi)))^{1/p}$
 for all $\lambda > 0$
 \cite[see][Chapter~6.2]{pukelsheim1993optimal}.

For uniform random subsampling, 
the information matrix is given by
$ \M(\xi_{\alpha}) = \alpha \M(\xi_{1}) $,
where $ \M(\xi_{1}) $ is the information matrix for the full sample
with raw moments $m_k(\xi_{1}) = \E (\X_{i}^{k})$
as entries in the $(j,j^\prime)$th position, $j + j^\prime - 2 = k$.
Thus, the $D$-efficiency $\eff_{D,\alpha}(\xi_{\alpha})$ of uniform random subsampling
can be nicely interpreted: 
the sample size (mass) required to obtain the same precision (in terms of the $D$-criterion), 
as when the $D$-optimal subsampling design $\xi_{\alpha}^*$ of mass $\alpha$ is used,
is equal to the inverse of the efficiency $\eff_{D,\alpha}(\xi_{\alpha})^{-1}$ times $\alpha$.
For example, if the efficiency $\eff_{D,\alpha}(\xi_{\alpha})$ is equal
to $0.5$, then twice as many observations would be needed under 
uniform random sampling 
than for a $D$-optimal subsampling design of size $\alpha$.
Of course, the full sample has higher information than any proper subsample
such that, obviously, for uniform random subsampling, 
$\eff_{D,\alpha}(\xi_{\alpha}) \geq \alpha$ holds for all $\alpha$.

For the examples of Sections~\ref{sec:linear} and \ref{sec:quadratic},
the efficiency of uniform random subsampling is given 
 in Table~\ref{Table:Eff} for selected values of $\alpha$
\begin{table}[h]
	\begin{center}
		\caption{Efficiency of uniform subsampling w.r.t. $ D $-optimality for selected values of the subsampling proportion $ \alpha $}
		\begin{tabular}{l|l|cccc} \toprule
			\multicolumn{2}{c|}{ } & \multicolumn{4}{c}{$\alpha$} \\
			\multicolumn{2}{c|}{ } & 0.5 & 0.3 & 0.1 & 0.01 \\
			\midrule
			linear regression & normal  & 0.73376 & 0.61886 & 0.47712 & 0.34403 \\
			 & exponential     & 0.73552 & 0.61907 & 0.46559 & 0.30690 \\
			\midrule
			quadratic regression & normal & 0.73047 & 0.59839 & 0.41991 & 0.24837 \\
			 & uniform   & 0.78803 & 0.70475 & 0.62411 & 0.58871 \\
			 &{$ t_{5} $} & {0.66400} & {0.50656} & {0.29886} & {0.10941} \\
			 &{$ t_{9} $} & {0.70390} & {0.56087} & {0.36344} & {0.17097} \\
			\bottomrule
		\end{tabular}
		\label{Table:Eff}
	\end{center}
\end{table}
and exhibited in Figure~\ref{Figure:Efficiencies}
for the full range of $\alpha$ between $0$ and $1$
(solid lines).
\begin{figure}[htb]
	\centering
	\begin{subfigure}[t]{.475\textwidth}
		\centering
		\includegraphics[width=\linewidth]{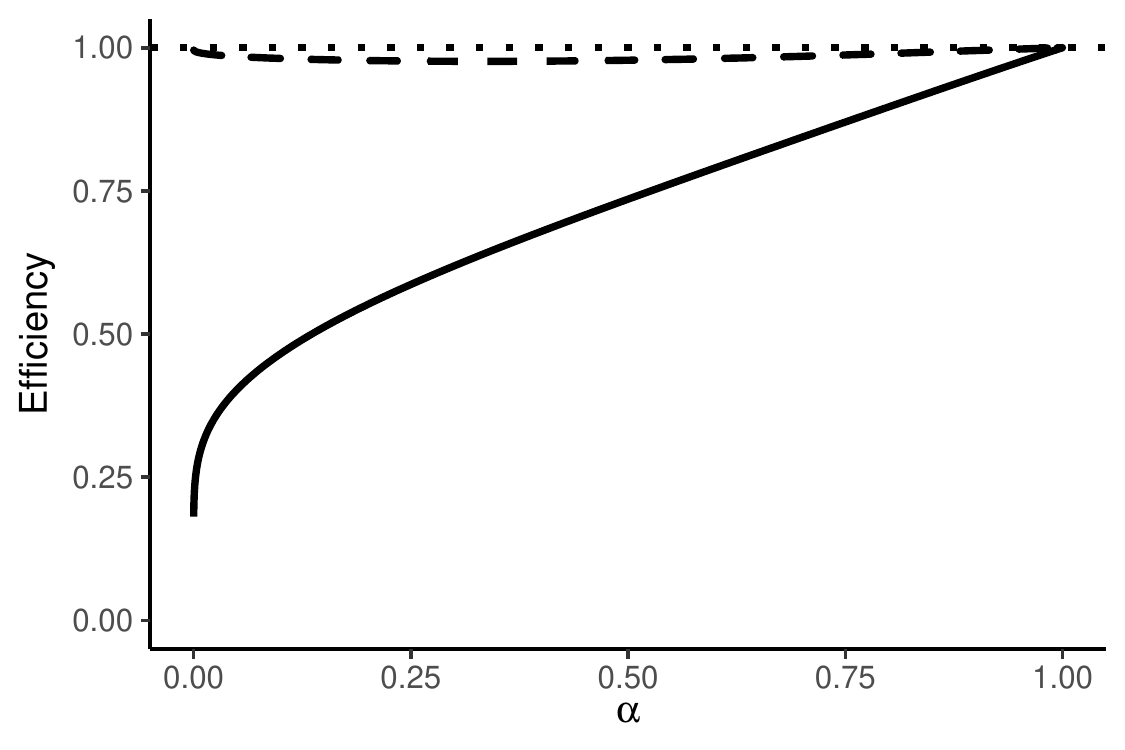}
		\caption{Linear regression, exponential covariate}
	\end{subfigure}
	\hfill
	\begin{subfigure}[t]{.475\textwidth}
		\centering
		\includegraphics[width=\linewidth]{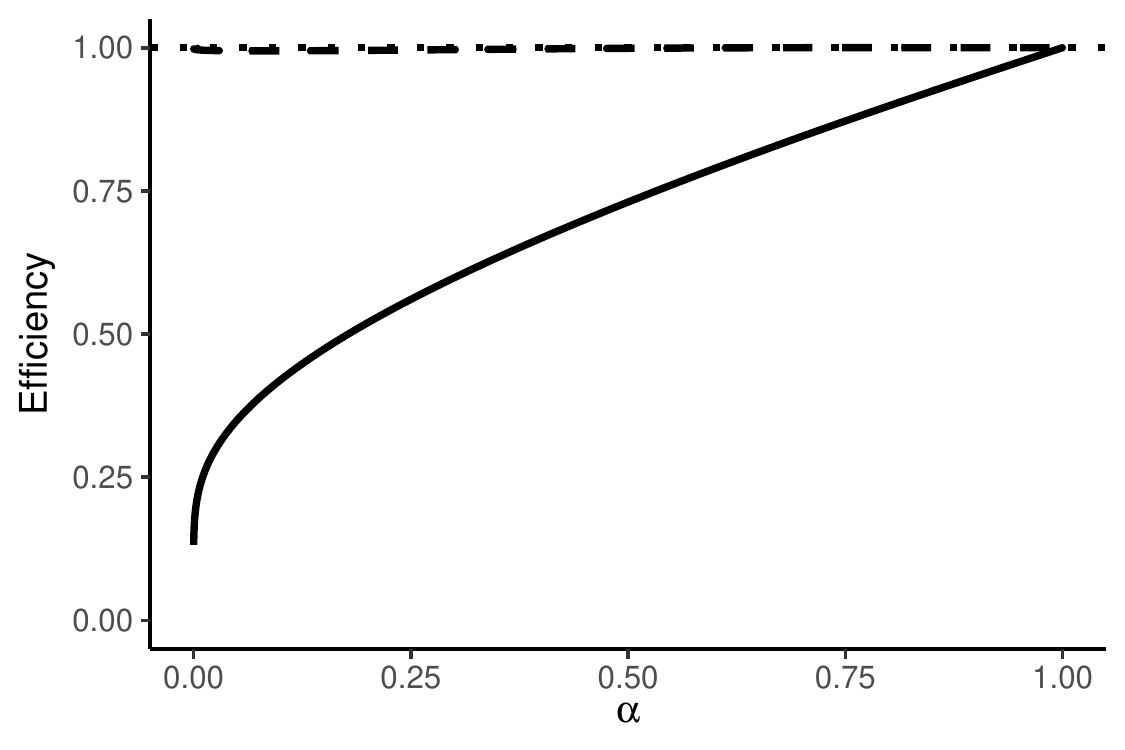}
		\caption{Quadratic regression, normal covariate}
	\end{subfigure}
	\medskip
	\begin{subfigure}[t]{.475\textwidth}
		\centering
		\includegraphics[width=\linewidth]{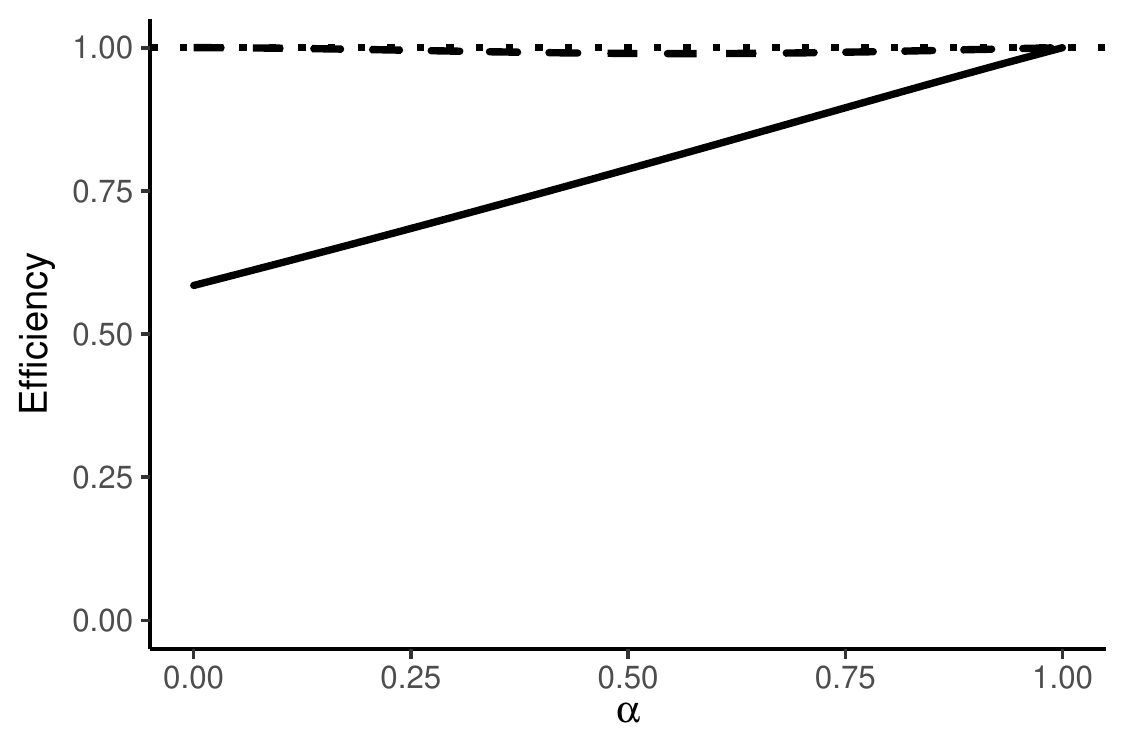}
		\caption{Quadratic regression, uniform covariate}
	\end{subfigure}
	\hfill
	\begin{subfigure}[t]{.475\textwidth}					
		\centering
		\includegraphics[width=\linewidth]{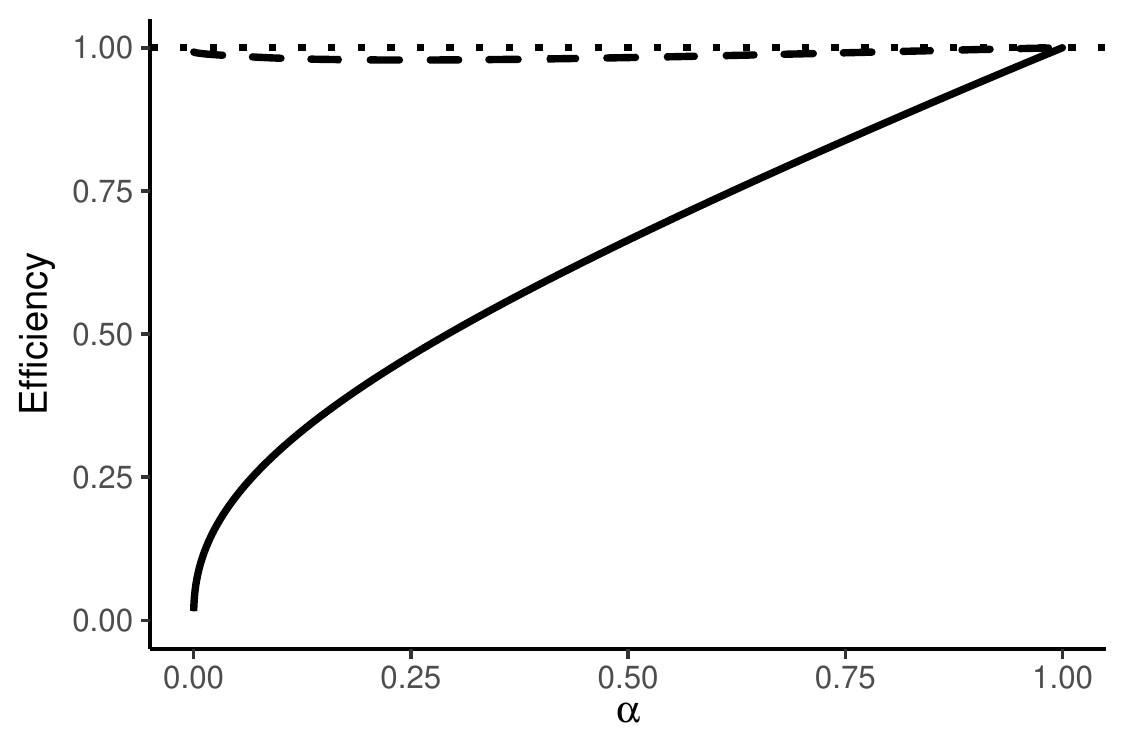}
		\caption{Quadratic regression, $ t_{5} $ covariate}
	\end{subfigure}
	\caption{Efficiency of uniform random subsampling (solid line)
		and of an IBOSS-type subsampling design (dashed line) w.r.t. $ D $-optimality}
	\label{Figure:Efficiencies}
\end{figure}
Here the determinant of the information matrix 
is determined as in the examples 
of Sections~\ref{sec:linear} and \ref{sec:quadratic}
for the optimal subsampling designs $\xi_{\alpha}^{*}$
either numerically or by explicit formulas where available.

Both Table~\ref{Table:Eff} and Figure~\ref{Figure:Efficiencies}
indicate that the efficiency of uniform random subsampling
is decreasing in all cases
when the proportion $\alpha$ of subsampling gets smaller.
In the case of quadratic regression with uniformly distributed covariate,
the decrease is more or less linear with a minimum value 
of approximately $0.58$ 
when $\alpha$ is small.
In the other cases, where the distribution of the covariate 
is unbounded, the efficiency apparently decreases faster,
when the proportion $\alpha$ is smaller than $10\%$,
and tends to $0$ for $\alpha \to 0$.

The latter property can be easily seen for linear regression
and symmetric distributions: there, the efficiency
$\eff_{D,\alpha}(\xi_{\alpha})$ of uniform random sampling
is bounded from above by $c / q_{1 - \alpha/2}$,
where $c = \E(X_i^2)^{1/2}$ is a constant
and $q_{1 - \alpha/2}$ is the $(1 - \alpha/2)$-quantile
of the distribution of the covariate.
When the distribution is unbounded like the normal distribution,
then these quantiles tend to infinity for $\alpha \to 0$
and, hence, the efficiency tends to $0$.
Similar results hold for quadratic regression and
asymmetric distributions.

In any case, as can be seen from Table~\ref{Table:Eff},
the efficiency of uniform random subsampling is quite low
for reasonable proportions $\alpha \leq 0.1$
and, hence, the gain in using the $D$-optimal subsampling
design is substantial.

By equivariance arguments as indicated above in the examples of
Sections~\ref{sec:linear} and \ref{sec:quadratic}, the present efficiency
considerations carry over directly to a covariate having a general normal,
exponential, or uniform distribution, respectively.

In the IBOSS approach by \cite{wang2019information}, 
  of the proportion $\alpha$ is taken from both tails of 
the data. The corresponding continuous subsampling 
design $\xi_{\alpha}^{\prime}$
would be to have two intervals $(-\infty, b]$ and $[a, \infty)$ 
and to choose the boundary points $a$ and $b$ 
to be the $(1 - \alpha/2)$- and $(\alpha/2)$-quantile of the
distribution of the covariate, respectively. For linear
regression, it can been seen from 
Corollary~\ref{cor:opt-design-degree-q-symm}
that the subsampling design $\xi_{\alpha}^{\prime}$ is $D$-optimal
when the distribution of the covariate is symmetric.
As the IBOSS procedure does not use
prior knowledge of the distribution, it would be
tempting to investigate the efficiency of the 
corresponding continuous subsampling design 
$\xi_{\alpha}^{\prime}$ under asymmetric distributions. 
For the exponential distribution, this efficiency
$\eff_{D,\alpha}(\xi_{\alpha}^{\prime})$ is added to the
upper left panel in Figure~\ref{Figure:Efficiencies}
by a dashed line. There the subsampling design $\xi_{\alpha}^{\prime}$ 
shows a remarkably high efficiency over the whole
range of $\alpha$ with a minimum value  
$0.976$ at $ \alpha = 0.332 $. 

As an extension of IBOSS for quadratic regression,
we may propose a procedure which takes proportions 
$\alpha/3$ from both tails of the data as well as from 
the center of the data. 
This procedure can be performed without any prior 
knowledge of the distribution of the covariate.
The choice of the proportions $\alpha/3$
is motivated by the standard case $D$-optimal design
on an interval where one third of the weight is allocated
to each of the endpoints and to the midpoint of the interval, respectively.
For a symmetric distribution, the corresponding
continuous subsampling design 
$\xi_{\alpha}^{\prime\prime}$ can be defined by
the boundary points $a$ and $b$ to be
the $(1 - \alpha/3)$- and $(1/2 + \alpha/6)$-quantile of the
distribution of the covariate, respectively. 
In the case of the uniform distribution, the subsampling design
$\xi_{\alpha}^{\prime\prime}$ is the limiting $D$-optimal subsampling
design for $\alpha \to 0$ by Theorem~\ref{th:unif-alpha-to-0}. 
In Figure~\ref{Figure:Efficiencies},
the efficiency $\eff_{D,\alpha}(\xi_{\alpha}^{\prime\prime})$ 
is shown by dashed lines for the whole range of $\alpha$ 
for the uniform distribution as well as for the normal and for the t- distribution
in the case of quadratic regression.
In all three cases, the subsampling design 
$\xi_{\alpha}^{\prime\prime}$ is highly efficient over the whole
range of $\alpha$ with minimum values 
$0.994$ at $ \alpha = 0.079 $ for the normal distribution, 
$0.989$ at $ \alpha = 0.565 $ for the uniform distribution, 
and
$0.978$ at $ \alpha = 0.245 $ for the $ t_{5} $-distribution,
respectively. This is of particular interest for the $ t_{5} $-distribution,
where the interior interval of the $ D $-optimal subsampling design 
$ \xi_{\alpha}^{*} $ is considerably smaller than of the IBOSS-like 
subsampling design $ \xi_{\alpha}^{\prime\prime} $ and even vanishes entirely for 
$ \alpha > \alpha^* \approx 0.08 $.
However, we only tested this extension of IBOSS for quadratic regression 
for symmetric distributions of the covariate. Further investigations for non-symmetric
distributions is necessary.

\section{Concluding Remarks}
\label{sec:discussion}

In this paper we have considered a theoretical approach 
to evaluate subsampling designs under distributional assumptions
on the covariate in the case of polynomial regression on a
single explanatory variable.
We first reformulated the constrained equivalence theorem
under Kuhn-Tucker conditions in
\cite{sahm2001note} to characterize
the $D$-optimal continuous subsampling design for general distributions of the covariate.
For symmetric distributions of the covariate we concluded the following. 
The $ D $-optimal subsampling design is equal to the bounding distribution 
in its support and the support of the optimal subsampling design will be the union 
of at most $ q + 1 $ intervals that are symmetrically placed around zero. 
Further we have found that in the case of quadratic regression
the $ D $-optimal subsampling design has three support intervals with
positive mass for all $ \alpha \in (0,1) $, whereas the interior interval
vanishes for some $ \alpha $ for a $ t $-distributed covariate.
In contrast to that, for linear regression,
always two intervals are required at the tails of the distribution.

The main emphasis in this work was on $D$-optimal subsampling designs. 
But many of the results may be extended to other
optimality criteria like $A$- and $E$-optimality from the Kiefer's 
$\Phi_q$-class of optimality criteria, $IMSE$-optimality for predicting
the mean response, or optimality criteria based on subsets or linear 
functionals of parameters.

The $D$-optimal subsampling designs show a high performance compared to
uniform random subsampling. In particular, for small proportions,
the efficiency of uniform random subsampling tends to 
zero when the distribution of the covariate is unbounded.
This property is in accordance with the observation 
that estimation based on subsampling according
to IBOSS is ``consistent'' in the sense that the mean squared error 
goes to zero with increasing population size even when the size of
the subsample is fixed.

We propose a generalization of the IBOSS method to quadratic 
regression which does not require prior knowledge of the
distribution of the covariate and which performs remarkably well
compared to the optimal subsampling design. However, an extension to 
higher order polynomials does not seem to be obvious.

\appendix
\section{Proofs}

Before proving Theorem~\ref{th:opt-design-degree-q},
we establish two preparatory lemmas on properties of the 
sensitivity function $\psi(x, \xi)$ for a continuous subsampling design $\xi$
with density $f_{\xi}(x)$
and reformulate an equivalence theorem on constraint design 
optimality by \cite{sahm2001note} for the present setting.
The first lemma deals with the shape of the sensitivity function.

\begin{lemma}
	\label{lem:sens-polynomial-2q}
	The sensitivity function $\psi(x, \xi)$ is a polynomial of degree $2q$
	with positive leading term.
\end{lemma}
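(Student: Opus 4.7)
The plan is to unfold the definition of the sensitivity function and read off the leading term directly. Recall that $\psi(x, \xi) = \alpha\, \f(x)^\top \M(\xi)^{-1} \f(x)$ with $\f(x) = (1, x, \dots, x^q)^\top$. Expanding the quadratic form gives
\begin{equation*}
	\psi(x, \xi) = \alpha \sum_{j=0}^{q} \sum_{j^\prime=0}^{q}
	\bigl[\M(\xi)^{-1}\bigr]_{j j^\prime}\, x^{j + j^\prime},
\end{equation*}
which is manifestly a polynomial in $x$ of degree at most $2q$, with the only contribution to $x^{2q}$ coming from the pair $(j, j^\prime) = (q, q)$.

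It then suffices to verify that the coefficient $\alpha\, [\M(\xi)^{-1}]_{q q}$ of $x^{2q}$ is strictly positive. As noted in the paper, for any continuous subsampling design $\xi$ the information matrix $\M(\xi) = \int \f(x) \f(x)^\top f_\xi(x)\, \mathrm{d}x$ is of full rank, and it is nonnegative definite as an integral of rank-one nonnegative definite matrices. Hence $\M(\xi)$ is positive definite, and so is $\M(\xi)^{-1}$; in particular every diagonal entry of $\M(\xi)^{-1}$ is strictly positive (apply the positive definite quadratic form to the standard basis vector $e_{q+1}$). Combined with $\alpha > 0$, this shows that the leading coefficient is positive and the degree of $\psi(\cdot, \xi)$ equals $2q$.

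There is essentially no obstacle here: the two ingredients (full rank of $\M(\xi)$ for continuous designs, and positivity of diagonal entries of a positive definite matrix) are both already in hand from the setup in Section~\ref{sec:design}. The only subtlety worth making explicit is that the existence of $\M(\xi)^{-1}$ for every continuous $\xi$ under the moment assumption $\E(X_i^{2q}) < \infty$ has been asserted in Section~\ref{sec:design}, so no further hypotheses are needed.
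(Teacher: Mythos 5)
Your proof is correct and follows essentially the same route as the paper: both identify the leading coefficient of $\psi(\cdot,\xi)$ as (a positive multiple of) the last diagonal entry of $\M(\xi)^{-1}$ and invoke positive definiteness of the information matrix for continuous designs to conclude it is strictly positive. Your version is slightly more explicit about why $\M(\xi)$ is positive definite and about the constant $\alpha$, but there is no substantive difference.
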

\begin{proof}[Proof of Lemma~\ref{lem:sens-polynomial-2q}]
	For a continuous subsampling design $\xi$ with density $f_\xi(x)$, 
	the information matrix $\M(\xi) $ and, hence, its inverse 
	$\M(\xi)^{-1} $ is positive definite.
	Thus the last diagonal element $m^{(pp)}$ of $\M(\xi)^{-1} $ 
	is positive and, as $\f(x) = (1, x, \dots, x^q)^\top$,
	the sensitivity function $ \psi(x, \xi) = \f(x)^{\top} \M(\xi)^{-1} \f(x) $
	is a polynomial of degree $2q$ with coefficient $m^{(pp)} > 0$ 
	of the leading term.
\end{proof}

The second lemma reveals a distributional property of the
sensitivity function considered as a function in the covariate $X_i$.
	
\begin{lemma}
	\label{lem:sens-continuous}
	The random variable $\psi(X_i, \xi)$ 
	has a continuous cumulative distribution function.
\end{lemma}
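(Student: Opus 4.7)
The plan is to exploit the polynomial nature of the sensitivity function established in Lemma~\ref{lem:sens-polynomial-2q} together with the assumed continuity of the distribution of $X_i$. Continuity of the cumulative distribution function of a real random variable is equivalent to the statement that it has no atoms, so it suffices to show that $\Prob(\psi(X_i,\xi) = c) = 0$ for every $c \in \R$.

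First I would fix an arbitrary threshold $c \in \R$ and consider the level set $\mathcal{L}_c = \{x \in \R : \psi(x, \xi) = c\}$. By Lemma~\ref{lem:sens-polynomial-2q}, the map $x \mapsto \psi(x, \xi) - c$ is a non-zero polynomial of degree $2q$ (its leading coefficient $m^{(pp)}$ is strictly positive and therefore unchanged by the constant shift). Hence $\mathcal{L}_c$ consists of at most $2q$ real points by the fundamental theorem of algebra.

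Next I would use the assumption that $X_i$ has a density $f_X$ with respect to Lebesgue measure. Any finite (hence Lebesgue-null) subset of $\R$ has probability zero under such a distribution, so
\begin{equation*}
	\Prob(\psi(X_i, \xi) = c) \;=\; \Prob(X_i \in \mathcal{L}_c) \;=\; \int_{\mathcal{L}_c} f_X(x) \diff x \;=\; 0.
\end{equation*}
Since $c$ was arbitrary, the cumulative distribution function $c \mapsto \Prob(\psi(X_i, \xi) \leq c)$ has no jumps and is therefore continuous.

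There is essentially no obstacle here; the only thing to be careful about is to verify that $\psi(\cdot, \xi) - c$ is genuinely a non-zero polynomial (and thus has finitely many roots), which follows immediately from the positivity of the leading coefficient noted in the proof of Lemma~\ref{lem:sens-polynomial-2q}. The argument does not rely on the boundedness condition $f_\xi \leq f_X$, only on the absolute continuity of the distribution of $X_i$.
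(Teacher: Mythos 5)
Your argument is correct and is essentially the paper's own proof: both use Lemma~\ref{lem:sens-polynomial-2q} to conclude that each level set $\{x : \psi(x,\xi) = c\}$ is finite (at most $2q$ points by the fundamental theorem of algebra) and then invoke the continuity of the distribution of $X_i$ to conclude that every such set, and hence every atom of $\psi(X_i,\xi)$, has probability zero. No changes needed.
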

\begin{proof}[Proof of Lemma~\ref{lem:sens-continuous}]
	As the sensitivity function $\psi(x, \xi)$ is a non-constant 
	polynomial by Lemma~\ref{lem:sens-polynomial-2q},
	the equation $\psi(x, \xi) = s$ has only finitely many roots
	$x_1, \dots, x_{\ell}$, $\ell \leq 2q$, say, by the fundamental theorem of 
	algebra. Hence, 
	$\Prob(\psi(X_i, \xi) = s) = \sum_{k=1}^{\ell} \Prob(X_i = x_k) = 0$
	by the continuity of the distribution of $X_i$
	which proves the continuity of the cumulative distribution 
	function of $\psi(X_i, \xi)$.
	\hfill $\Box$
\end{proof}

With the continuity of the distribution of $\psi(X_i,\xi^*)$
the following equivalence theorem can be obtained from
Corollary~1(c) in \cite{sahm2001note} for the present setting
by transition from the directional derivative to the sensitivity 
function and considering $\R$ as the design region.

\begin{theorem}[Equivalence Theorem]
	\label{th:equivalence}
	The subsampling design $\xi^*$ is $D$-optimal if and only if
	there exist a threshold $s^*$ 
	and a subset $\XX^*$ of $\R$ such that
	\begin{itemize}
		\item[(i)]	
		the $D$-optimal subsampling design $\xi^{*}$ is given by
		\begin{equation*}
			\label{eq:opt-dens-general}
			f_{\xi^{*}}(x) = f_{X}(x) \1_{\XX^*}(x) \,
		\end{equation*}
		\item[(ii)] $\psi(x, \xi^*) \geq s^*$ for $x \in \XX^*$, and
		\item[(iii)] $\psi(x, \xi^*) < s^*$ for $x \not\in \XX^*$.
	\end{itemize}	
\end{theorem}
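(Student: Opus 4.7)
The plan is to obtain Theorem~\ref{th:equivalence} by specializing Corollary~1(c) of \cite{sahm2001note} to the present density-constrained setting, and then using Lemmas~\ref{lem:sens-polynomial-2q} and \ref{lem:sens-continuous} to convert the weak KKT inequality on the complement of $\XX^*$ into the strict inequality required by (iii). The maximization problem
\begin{equation*}
\text{maximize}\ \log\det\M(\xi) \quad\text{subject to}\ 0 \leq f_\xi \leq f_X,\ \int f_\xi = \alpha
\end{equation*}
is concave on a convex set of densities, and the directional derivative at $\xi^*$ toward a one-point design $\xi_x$ reduces to $F_D(\xi^*,\xi_x) = p - \psi(x,\xi^*)$. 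Sahm and Schwabe's theorem yields, at any optimum $\xi^*$, a Kuhn-Tucker multiplier $s^*$ (dual to the mass constraint) such that the upper bound $f_{\xi^*} = f_X$ is active wherever $\psi(\cdot,\xi^*) > s^*$, the lower bound $f_{\xi^*} = 0$ is active wherever $\psi(\cdot,\xi^*) < s^*$, and $f_{\xi^*}$ is unrestricted on the level set $\{x : \psi(x,\xi^*) = s^*\}$.

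I would then set $\XX^* := \{x : \psi(x,\xi^*) \geq s^*\}$, so that (ii) holds by construction and (iii) holds on the complement by strict inequality. For (i), Lemma~\ref{lem:sens-polynomial-2q} says that $\psi(\cdot,\xi^*)$ is a non-constant polynomial of degree $2q$, so the level set $\{\psi(\cdot,\xi^*) = s^*\}$ has at most $2q$ points, and Lemma~\ref{lem:sens-continuous} then ensures it carries zero probability under $X_i$ and hence zero $f_X$-measure. Consequently the KKT prescription, $f_{\xi^*} = f_X$ on $\{\psi > s^*\}$ and $f_{\xi^*} = 0$ on $\{\psi < s^*\}$, extends unambiguously to $f_{\xi^*} = f_X \1_{\XX^*}$ up to a null set, which is (i).

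The converse direction, that (i)--(iii) imply $D$-optimality, is the standard concavity argument. For any admissible $\xi$, the signed density $f_\xi - f_{\xi^*}$ is non-positive on $\XX^*$ (where $f_{\xi^*} = f_X$ saturates the upper bound) and non-negative on its complement, while $\psi(\cdot,\xi^*) - s^*$ has the opposite sign pattern; hence the pointwise product $(\psi(x,\xi^*) - s^*)(f_\xi(x) - f_{\xi^*}(x)) \leq 0$. Integrating and using $\int (f_\xi - f_{\xi^*})\,dx = 0$ gives $\int \psi(x,\xi^*) f_\xi(x)\,dx \leq \int \psi(x,\xi^*) f_{\xi^*}(x)\,dx$, and concavity of $\log\det\M$ then yields $\log\det\M(\xi) \leq \log\det\M(\xi^*)$.

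The main obstacle is the sharpening step described above: without the continuity provided by Lemma~\ref{lem:sens-continuous}, one only obtains $\psi \leq s^*$ on the complement of $\XX^*$, and the strict inequality in (iii) would fail on any mass-bearing portion of the level set $\{\psi(\cdot,\xi^*) = s^*\}$. The polynomial structure from Lemma~\ref{lem:sens-polynomial-2q} together with continuity of the distribution of $X_i$ is exactly what forces that level set to be negligible, making the translation from Sahm and Schwabe's general KKT characterization to the crisp set-valued form stated here possible.
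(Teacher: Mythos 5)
Your proposal is correct and follows essentially the same route as the paper: the paper likewise obtains the theorem by specializing Corollary~1(c) of Sahm and Schwabe to the design region $\R$, passing from the directional derivative to the sensitivity function, and invoking the continuity of the distribution of $\psi(X_i,\xi^*)$ (Lemma~\ref{lem:sens-continuous}) to render the level set $\{\psi(\cdot,\xi^*)=s^*\}$ negligible. Your write-up merely fills in more detail (the explicit KKT structure and the self-contained concavity argument for sufficiency) than the paper, which leaves these to the cited reference.
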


As $\Prob(\psi(X_i, \xi^*) \geq s^*) = \Prob(X_i \in \XX^*) 
= \int f_{\xi^*}(x) \diff x = \alpha$, the threshold $s^*$
is the $(1 - \alpha)$-quantile of the distribution of $\psi(X_i, \xi^*)$.

\begin{proof}[Proof of Theorem~\ref{th:opt-design-degree-q}]
By Lemma~\ref{lem:sens-polynomial-2q} 
the sensitivity function $\psi(x, \xi)$ is a polynomial in $x$ 
of degree $2q$ with positive leading term.
Using the same argument as in the proof of 
Lemma~\ref{lem:sens-continuous} we obtain
that there are at most $2q$ roots of the equation 
$\psi(x, \xi^*) = s^*$ and, hence, there are at most $2q$ 
sign changes in $\psi(x, \xi^*) - s^*$. As $\psi(x, \xi^*)$ is 
a polynomial of even degree, also the number of (proper)
sign changes has to be even, and they occur at
$a_1 > \dots > a_{2r}$, say, $r \leq q$. Moreover, 
for $0 < \alpha < 1$, $\XX^*$ is a proper subset of $\R$ 
and, thus, there must be at least one sign change, 
$r \geq 1$. Finally, as the
leading coefficient of $\psi(x, \xi^*)$ is positive, 
$\psi(x, \xi^*)$ gets larger than $s^*$ for $x \to \pm \infty$ 
and, hence, the outmost intervals $[a_1, \infty)$ and 
$(- \infty, a_{2r}]$ are included in the support $\XX^*$ of
$\xi^*$. By the interlacing property of intervals with positive and
negative sign for  $\psi(x, \xi^*) - s^*$, the result follows from the 
conditions on the $ D $-optimal subsampling design $ \xi^{*} $
in Theorem~\ref{th:equivalence}.
\end{proof}

\begin{proof}[Proof of Theorem~\ref{th:equivariant}]
	First note that for any $\mu$ and $ \sigma >0 $, 
	the location-scale transformation $ z = \sigma x + \mu $ 
	is conformable with the regression function $ \f(x) $, 
	i.\,e.\ there exists a non-singular matrix 
	$ {\bf Q} $ such that $ \f(\sigma x + \mu) = {\bf Q}\f(x) $ for all $x$. 
	Then, for any design $ \xi $ bounded by $ f_{X}(x) $, 
	the design $ \zeta $ has density 
	$  f_{\zeta}(z) = \frac{1}{\sigma} f_{\xi}(\frac{z - \mu}{\sigma}) $ 
	bounded by 
	$  f_{Z}(z) = \frac{1}{\sigma} f_{X}(\frac{z - \mu}{\sigma}) $.
	Hence, by the transformation theorem for measure integrals, 
	it holds that
	\begin{align*}
		\M(\zeta) 
		&= \int \f(z)\f(z)^{\top} \zeta(\diff z) \\
		&= \int \f(\sigma x + \mu)\f(\sigma x + \mu)^{\top} \xi(\diff x) \\
		&= \int {\bf Q} \f(x)\f(x)^{\top} {\bf Q}^{\top}\xi(\diff x) \\
		&= {\bf Q} \M(\xi) {\bf Q}^{\top}.
	\end{align*}
	Therefore 
	$ \det(\M(\zeta)) = \det({\bf Q})^{2} \det(\M(\xi))$. 
	Thus $ \xi^{*} $ maximizes the $ D $-criterion over the set of 
	subsampling designs bounded by $ f_{X}(x) $ if and only if $ \zeta_{*} $ 
	maximizes the $ D $-criterion over the set of subsampling designs bounded 
	by $ f_{Z}(z) $.
\end{proof}

\begin{proof}[Proof of Corollary~\ref{cor:opt-design-degree-q-symm}]
	The checkerboard structure of the information matrix $\M(\xi^*)$
	carries over to its inverse $\M(\xi^*)^{-1}$.
	Hence, the sensitivity function $\psi(x, \xi^*)$ is an even polynomial,
	which has only non-zero coefficients for even powers of $x$,
	and is thus symmetric with respect to $0$, 
	i.\,e.\ $\psi(-x, \xi^*) = \psi(x, \xi^*)$.
	Accordingly, also the roots of $\psi(x, \xi^*) =s^*$ are
	symmetric with respect to $0$.
\end{proof}

\begin{proof}[Proof of Theorem~\ref{Theo:normal}]
	In view of the shape~\eqref{eq:density-quadratic} of the density
	and by Corollary~\ref{cor:opt-design-degree-q-symm},
	the tails are included in the optimal subsampling design
	such that $a < \infty$.
	
	Next, we consider the symmetric design $\xi^\prime$ which is supported 
	only on the tails and which will be the optimal subsampling design when $b = 0$.
	This design has density 
	$f_{\xi^\prime}(x) = \1_{(-\infty, -a] \cup [a, \infty)}(x) f_X(x)$
	with $a = z_{1 - \alpha/2}$ for given $\alpha$. 
	The information matrix $\M(\xi^\prime)$ is of the form~\eqref{eq:info-quadratic}
	with relevant entries
	\begin{align*}
		m_2(\xi^{\prime}) &= \alpha + \sqrt{2/\pi} a \exp(-a^{2}/2), \\
		m_4(\xi^{\prime}) &= 3 m_{2}(\xi^{\prime}) + \sqrt{2/\pi} a^{3} \exp(-a^{2}/2) \, .
	\end{align*}
	For the sensitivity function~\eqref{eq:sensitivity-quadratic}, we have
	\begin{align*}
		\psi(0, \xi^{\prime}) 
		&= \frac{\alpha m_{4}(\xi^{\prime})}{\alpha m_{4}(\xi^{\prime}) - m_{2}(\xi^{\prime})^{2}}
	\end{align*}
	and 
		\begin{align*}
		\psi(a, \xi^{\prime}) 
		&= \frac{\alpha m_{4}(\xi^{\prime})}{\alpha m_{4}(\xi^{\prime}) - m_{2}(\xi^{\prime})^{2}}
		- \frac{\alpha 2 m_{2}(\xi^{\prime}) a^{2}}{\alpha m_{4}(\xi^{\prime}) - m_{2}(\xi^{\prime})^{2}} \\
		&\qquad\qquad+ \frac{\alpha a^2}{m_{2}(\xi^{\prime})}
		+ \frac{\alpha^2 a^{4}}{\alpha m_{4}(\xi^{\prime}) - m_{2}(\xi^{\prime})^{2}} \, .
	\end{align*}
	Let $c(\alpha) = \psi(0, \xi^{\prime}) - \psi(a, \xi^{\prime})$ be the difference
	between the values of the sensitivity function at $x = 0$ and $x = a$,
	then
	\begin{equation}
		\label{eq:c-normal-quadratic}
		c(\alpha) = \alpha a^{2}\left(\frac{2m_2(\xi^{\prime})}{\alpha m_{4}(\xi^{\prime}) 
			- m_{2}(\xi^{\prime})^{2}} - \frac{a^2\alpha}{\alpha m_{4}(\xi^{\prime}) 
			- m_{2}(\xi^{\prime})^{2}} - \frac{1}{m_2(\xi^{\prime})}\right) \, .
	\end{equation}
	$c(\alpha)$ is continuous in $\alpha$ and does not have any roots in $(0,1)$. 
	Further, it can be checked that $c(0.1) > 0$, say.
	Thus $c(\alpha) > 0$ 
	which means that $\psi(0, \xi^{\prime}) > \psi(a, \xi^{\prime})$ 
	for all $\alpha $.
	Hence, by Theorem~\ref{th:equivalence},
	the subsampling design $\xi^{\prime}$ cannot be optimal
	and, as a consequence, the optimal subsampling design $\xi^*$ has support
	on three proper intervals with $ b > 0 $ for all $ \alpha $.
\end{proof}

\begin{proof}[Proof of Theorem~\ref{th:unif-alpha-to-0}]	
	Let
	\begin{align*}
		u(\alpha) &= 45 - 15\alpha + 15\alpha^2 - 45\alpha^3 + 20\alpha^4 \\&\qquad\:\,- 4\sqrt{5}\sqrt{45\alpha^2 - 90\alpha^3 + 90\alpha^4 - 75\alpha^5 + 57\alpha^6 - 27\alpha^7 + 5\alpha^8}
	\end{align*}
	and
	\begin{align*}
		v(\alpha) &= 180 (1 - \alpha) \, .
	\end{align*}
	Then
	\begin{equation*}
		b(\alpha) 
			= \Biggl( \frac{u(\alpha)}{v(\alpha)} \Biggr)^{1/2} - \frac{1}{2} (1 - \alpha) \, .
	\end{equation*}
	We have $u(0) = 45$, $v(0) = 180$, 
	and $b(\alpha)$ can be continuously extended to $b(0) = 0$ at $\alpha = 0$. 
	The derivative of $b$ is given by
	\begin{align}
		\label{eq:unifb-derivative}
		b'(\alpha) = \frac{1}{2} 
			+ \frac{1}{2} \frac{u'(\alpha)v(\alpha) - u(\alpha)v'(\alpha)}{v(\alpha)^2} \sqrt{\frac{v(\alpha)}{u(\alpha)}} \, ,
	\end{align}
	where
	\begin{align}
		u'(\alpha) &= -15 + 30\alpha - 135\alpha^2 + 80\alpha^3 - w(\alpha), \\
		v'(\alpha) &= -180 ,
	\end{align}
	and
	\begin{align*}
		w(\alpha) &= 2 \sqrt{5} 
			\frac{90 - 270\alpha + 360\alpha^2 - 375\alpha^3 + 342 \alpha^4 - 189\alpha^5 
				+ 40\alpha^6} 
				{\sqrt{45 - 90\alpha + 90\alpha^2 - 75\alpha^3 + 57\alpha^4 - 27\alpha^5 + 5\alpha^6}} .
	\end{align*}
	We have $v'(0) = -180$. 
	To determine $u'(0)$ we 
	note that $w(0) = 60$ and thus $u'(0) = -75$.
	Hence, also the derivative $b'(\alpha)$ can be continuously extended at
	$\alpha = 0$ and the value for $b'(0)$ can be obtained by plugging in 
	the values of $u(0)$, $v(0)$, $u'(0)$, and $v'(0)$ 
	into formula~\eqref{eq:unifb-derivative},
	\begin{align*}
		b'(0) = \frac{1}{2} + \frac{1}{2} \frac{-75 \cdot 180 + 45 \cdot 180}{180^2} \sqrt{\frac{180}{45}} 
			= \frac{1}{3} \, .
	\end{align*}
	Finally, we note that $b(\alpha)/\alpha$ is the percentage of mass on 
	the interior interval $[-b(\alpha), b(\alpha)]$
	and that $\lim_{\alpha \to 0} b(\alpha)/\alpha$ 
	is the derivative $b'(0)$ of $ b(\alpha) $ at $ \alpha = 0 $.
	Hence, the percentage of mass on the interior interval tends to $b'(0) = 1/3$ 
	when the subsampling proportion $\alpha$ goes to $0$.
\end{proof}

\begin{proof}[Proof of Theorem~\ref{Theo:t5}]
	The proof will follow the idea of the proof of 
	Theorem~\ref{Theo:normal}. 
	For $\alpha \in (0, 1)$, 
	we consider the symmetric design $\xi^\prime$ which is supported 
	only on the tails and which will be the optimal subsampling design when $b = 0$.
	This design has density 
	$f_{\xi^\prime}(x) = \1_{(-\infty, -a] \cup [a, \infty)(x)} f_X(x)$
	with $a = t_{5, 1 - \alpha/2}$. 
	The relevant entries of the information matrix $\M(\xi^\prime)$ are
	\begin{align*}
		m_2(\xi^{\prime}) 
			&= \frac{5}{3\pi} \left(\pi - \frac{2\sqrt{5}a(a^2 - 5)}{(a^2 + 5)^2} - 2 \arctan(a/\sqrt{5}) \right), 
		\\
		m_4(\xi^{\prime}) 
			&= \frac{25}{3\pi} 
				\left(3\pi + \frac{10\sqrt{5}a(a^2 +3)}{(a^2 + 5)^2} - 6 \arctan(a/\sqrt{5}) \right).
	\end{align*}
	The sensitivity function $\psi(x, \xi^\prime)$ and 
	the difference $c(\alpha) = \psi(0, \xi^{\prime}) - \psi(a, \xi^{\prime})$  
	between the values of the sensitivity function at $x = 0$ and $x = a$
	are defined as for the normal distribution with the above moments
	$m_2(\xi^{\prime})$ and $m_4(\xi^{\prime})$ related to the $t$-distribution inserted.
	The function $c(\alpha)$ defined by~\eqref{eq:c-normal-quadratic} 
	then looks as shown in Figure~\ref{Figure:Quad-t5-c}. 
	\begin{figure}[h]
		\centering
		\includegraphics[width=.6\textwidth]{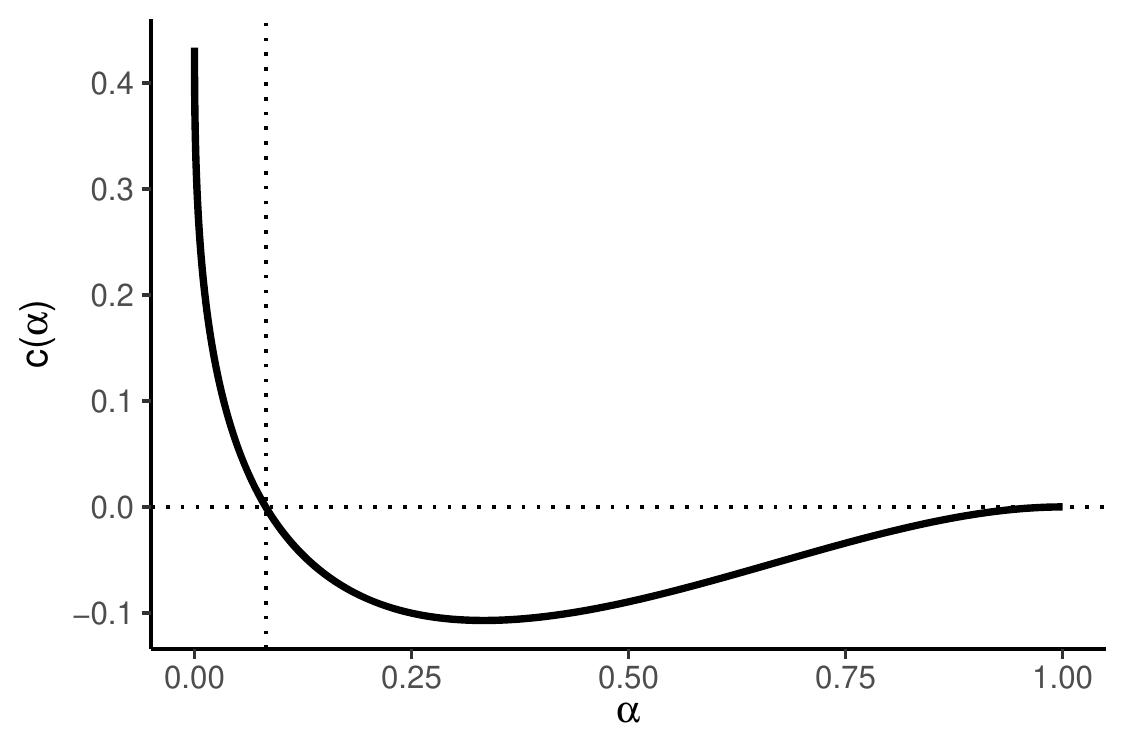}
		\caption{Difference $ c(\alpha) = \psi(0, \xi^{\prime}) - \psi(a, \xi^{\prime})$ (solid) 
			for the case of a $ t $-distributed covariate with $5$ degrees of freedom}
		\label{Figure:Quad-t5-c}
	\end{figure}
	The vertical dotted line indicates the position of the 
	critical value $ \alpha^{*} \approx 0.082065$,
	where the curve of the function $c(\alpha)$ intersects 
	the horizontal dotted line indicating $ c = 0 $.

	Thus for $ \alpha < \alpha^{*} \approx 0.082065 $ we have 
	$ \psi(0,\xi^{\prime}) > \psi(a,\xi^{\prime}) $ 
	and the design $\xi^\prime$ cannot be optimal by Theorem~\ref{th:equivalence}.
	In this situation, an inner interval has to be included 
	in the optimal subsampling design $\xi^*$
	with $b > 0$.

	Conversely, for $ \alpha \geq \alpha^{*} \approx 0.082065 $ we have that 
	$ \psi(0,\xi^{\prime}) \leq \psi(a,\xi^{\prime}) $.
	Hence, the design $\xi^\prime$ is optimal by Theorem~\ref{th:equivalence},
	and no inner interval has to be added 
	to the optimal subsampling design $\xi^* = \xi^\prime$ ($b = 0$).
\end{proof}

\newpage
	\section*{Acknowledgments}
	The work of the first author was supported by the Deutsche Forschungsgemeinschaft 
	(DFG, German Research Foundation) within the Research Training Group ``MathCoRe'' 
	under grant GRK 2297. 
	\bibliographystyle{plainnat}  
	\bibliography{ref}

\end{document}